\newcommand{\pr}{\partial}
\newcommand{\dom}{\Omega}
\newcommand{\sig}{\Sigma}
\newcommand{\gam}{\Gamma}
\newcommand{\hb}{H^{1/2}_{co}}
\newcommand{\hbi}{H^{-1/2}}
\newcommand{\s}{\mathcal{S}}
\newcommand{\ep}{\epsilon}
\newcommand{\el}{\mathcal{L}}
\newcommand{\R}{\mathbb{R}}
\newcommand{\dd}{\,\text{d}}
\newtheorem{thm}{Theorem}[section]
\newtheorem{lem}{Lemma}[section]
\newtheorem{cor}{Corollary}[section]
\newtheorem{dfn}{Definition}[section]
\title{Propagation of smallness for an elliptic PDE with piecewise Lipschitz coefficients}
\date{}
\author{C\u{a}t\u{a}lin I. C\^{a}rstea\thanks{School of Mathematics, Sichuan University, Chengdu, Sichuan, 610064, P.R.China; email: catalin.carstea@gmail.com} \and Jenn-Nan Wang\thanks{Institute of Applied Mathematical Sciences, NCTS, National Taiwan University, Taipei 106, Taiwan. email: jnwang@math.ntu.edu.tw}}
\begin{document}
\maketitle

\begin{abstract}
In this paper we derive a propagation of smallness result for a scalar second elliptic equation in divergence form whose leading order coefficients are Lipschitz continuous on two sides of a $C^2$ hypersurface that crosses the domain, but may have jumps across this hypersurface.  Our propagation of smallness result is in the most general form regarding the locations of domains, which may intersect the interface of discontinuity. At the end, we also list some consequences of the propagation of smallness result, including stability results for the associated Cauchy problem, a propagation of smallness result from sets of positive measure, and a quantitative Runge approximation property.
\end{abstract}

\section{Introduction}

Propagation of smallness is a quantitative form of the unique continuation property for solutions of partial differential equations. It can be regarded as a generalization of Hadamard three-circle theorem for analytic functions. For linear second order elliptic equations with nice coefficients, the propagation of smallness is well understood, see for example \cite{Brummelhuis1995} or the survey article \cite{ARRV} (and references therein). In this paper, we aim to study the propagation of smallness for second order elliptic equations with jump-type discontinuous leading order coefficients. 

The highlight of our result is that the domains in the propagation of smallness are arbitrarily chosen and may intersect the interface of discontinuity. It is also important to note that we obtain an inequality with exactly the same dependence on the geometry of the domains involved as in the classical result for Lipschitz leading order coefficients. This implies that a number of consequences of the classical result also apply to the type of piecewise Lipschitz leading order coefficients we are considering here. These consequences include stability results for the associated Cauchy problem, such as the ones proved in \cite{ARRV}, propagation of smallness from sets of positive measure, as the one proved in \cite{MV}, or the quantitative Runge approximation property  developed in \cite{RS}. Propagation of smallness and quantitative Runge approximation property have important applications to inverse problems such as the identification of obstacles by boundary measurements, or in the proof of stability results.

\subsection{Notations}

To better describe the main result, we would like to introduce several notations. Let $U\subset\R^n$, $n\geq 2$ be an open bounded domain. Suppose we have coefficients $a_{jk}, b_j, q\in L^\infty(U)$, $j,k=1,\ldots,n$. We will say that 
\begin{equation*}
\gamma:=\left((a_{jk})_{jk}, (b_j)_j, q  \right)\in \mathscr{V}(U,\lambda,M,K_1,K_2),
\end{equation*}
where $\lambda, M, K_1, K_2$ are positive constants, if
\begin{equation*}
\lambda |\xi|^2\leq \sum_{jk}a_{jk}(x)\xi_j\xi_k\leq\lambda^{-1}|\xi|^2,\quad\forall x\in U,
\end{equation*}
\begin{equation*}
||a_{jk}||_{C^{0,1}(U)}\leq M,\quad \|b_j\|_{L^\infty(U)}\leq K_2, \quad \|q\|_{L^{\infty}(U)}\leq K_1.
\end{equation*}
In the case when $b_j=0$, $j=1,\ldots, n$, we will write
\begin{equation*}
\left((a_{jk})_{jk},  q  \right)\in \mathscr{V}_0(U,\lambda,M,K_1),
\end{equation*}
With the set of coefficients $\gamma$, we define the second order elliptic operator $\el_\gamma$ that acts on a function $u$ as follows
\begin{equation*}
\el_\gamma u=\sum_{jk}\pr_j(a_{jk}\pr_k u)+\sum_j b_j\pr_j u+qu.
\end{equation*}

Suppose now that  $\Omega\subset\R^n$ is a Lipschitz domain and $\sig\subset\dom$ is a $C^2$ hypersurface. Further assume that $\dom\setminus\sig$ only has two connected components, which we denote $\dom_\pm$. If we have coefficients  $a^\pm_{jk}, b_j, q\in L^\infty(\dom_\pm)$, $j,k=1,\ldots,n$, such that
\begin{equation*}
\gamma^\pm:=\left((a^\pm_{jk})_{jk}, (b_j)_j, q  \right)\in \mathscr{V}(\dom_\pm,\lambda,M,K_1,K_2),
\end{equation*}
we will use the notation $\el_\gamma$ to denote the operator
\begin{equation*}
\el_\gamma u=\sum_{jk}\left[\chi_{\dom_+}\pr_j(a^+_{jk}\pr_k u)+\chi_{\dom_-}\pr_j(a^-_{jk}\pr_k u)\right]+\sum_j b_j\pr_j u+qu.
\end{equation*}

For $P\in\R^n$ and $r>0$, $B_r(P)$ will denote the open ball with center $P$ and radius $r$. For an open set $A\subset \R^n$ and a number $s>0$, we will use the notations
\begin{equation*}
A^s=\{x\in\R^n:\mbox{dist}(x,A)<s\},
\end{equation*}
\begin{equation*}
A_s=\{x\in A:\mbox{dist}(x,\pr A)<s\},
\end{equation*}
and
\begin{equation*}
sA=\{sx:x\in A\}.
\end{equation*}

\begin{dfn}\label{def-sigma}
We say that $\sig$ is $C^2$ with constants $r_0$, $K_0$ if for any point $P\in\sig$, after a rigid transformation, $P=0$ and 
\begin{equation*}
\dom_\pm\cap C_{r_0, K_0}(0)=\{(x,y): x\in \R^{n-1}, |x|<r_0, y\in\R, y\gtrless\psi(x)\},
\end{equation*}
where $\psi$ is a $C^2$ function such that $\psi(0)=0$, $\nabla_x\psi(0)=0$,  $\|\psi\|_{C^2(B_{r_0}(0))}\leq K_0$, and
\begin{equation*}
C_{r_0,K_0}(0)=\{(x,y):x\in \R^{n-1}, |x|<r_0, |y|\leq\frac{1}{2}K_0r_0^2\}.
\end{equation*}
\end{dfn}

If $\sig$ is as above, then we may "flatten'' the boundary around the point $P$ (without loss of generality $P=0$) via the local $C^2$-diffeomeorphism
\begin{equation*}
\Psi_P(x,y)= (x, y-\psi(x)).
\end{equation*}

\subsection{Main result and outline}

Let $D\subset\subset\dom$ be open and connected. 
Suppose that $\sig\subset\dom$ is a $C^2$ hypersurface with constants $r_0$ and $K_0$. Further assume that $\dom\setminus\sig$ has two connected components, $\dom_\pm$.
Let $a^\pm_{jk}, q\in L^\infty(\dom_\pm)$, $j,k=1,\ldots,n$, be coefficients such that
\begin{equation*}
\left((a^\pm_{jk})_{jk},  q  \right)\in \mathscr{V}_0(\dom_\pm,\lambda,M,K_1).
\end{equation*}

With these assumptions, we will prove a propagation of smallness result as follows.

\begin{thm}\label{main-thm-1}
Suppose $u\in H^1(\dom)$ solves
\begin{equation*}
\el_\gamma u=f+\nabla \cdot F,\quad \|f\|_{L^2(\dom)}+\|F\|_{L^2(\dom)}\leq\epsilon.
\end{equation*}
There exist a constant $ h_0>0$, depending on $\lambda$, $M$, $K_1$, $r_0$, $K_0$, $\sig$, such that if  $0<h<h_0$,  
 $ r/2>h$, $D\subset\dom$ is connected, open, and such that $B_{r}(x_0)\subset D$, $\mbox{\rm dist} (D,\pr\dom)\geq h$, then 
\begin{equation*}
\|u\|_{L^2(D)}\leq C (\|u\|_{L^2(B_r(x_0))}+\epsilon)^\delta(\|u\|_{L^2(\dom)}+\epsilon)^{1-\delta},
\end{equation*}
where
\begin{equation*}
C=C_1\left(\frac{|\dom|}{h^{n}}\right)^\frac{1}{2},\quad \delta\geq\tau^{\frac{C_2|\dom|}{h^{n}}},
\end{equation*}
with $C_1, C_2>0$, $\tau\in(0,1)$ depending on $\lambda$, $M$, $K_1$,  $r_0$, $K_0$.
\end{thm}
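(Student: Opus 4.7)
The plan is to establish the theorem by the standard ``chain of balls'' propagation argument, reducing everything to a three-sphere inequality that holds uniformly with respect to the location of its center, including centers lying on or near the interface $\sig$.

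First I would establish the following local three-sphere inequality: for any $P\in\dom$ with $\mbox{dist}(P,\pr\dom)$ bounded below by a suitable multiple of $R$, and for radii $0<r_1<r_2<r_3<R$ whose mutual ratios are fixed, one has
\begin{equation*}
\|u\|_{L^2(B_{r_2}(P))}\leq C\bigl(\|u\|_{L^2(B_{r_1}(P))}+\epsilon\bigr)^{\theta}\bigl(\|u\|_{L^2(B_{r_3}(P))}+\epsilon\bigr)^{1-\theta},
\end{equation*}
with $C>0$ and $\theta\in(0,1)$ depending only on $\lambda$, $M$, $K_1$, $r_0$, $K_0$, and the ratios $r_i/r_j$. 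When $B_{r_3}(P)$ is disjoint from $\sig$, this is the classical result of Garofalo--Lin / Alessandrini--Rondi--Rosset--Vessella for equations with Lipschitz principal part. When $B_{r_3}(P)$ straddles $\sig$, I would flatten the interface around $P$ via the $C^2$-diffeomorphism $\Psi_P$ from Definition~\ref{def-sigma}; in the new coordinates the interface becomes $\{y=0\}$, the pushed-forward coefficients are still Lipschitz on each side and jump only across the hyperplane, and the transmission conditions $u|_+=u|_-$, $(a_{jk}^+\pr_k u)\nu_j|_+=(a_{jk}^-\pr_k u)\nu_j|_-$ are preserved (with transformed conormal). One then appeals to a Carleman estimate adapted to such piecewise-smooth transmission problems, from which the three-sphere inequality follows by the usual cut-off and optimization procedure; pulling back through $\Psi_P$ gives the inequality in the original coordinates, with constants uniform in $P\in\sig$.

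With the three-sphere inequality in hand, the propagation step is entirely classical. Given $y\in D$, pick a polygonal path in $D$ from some point in $B_{r/2}(x_0)$ to $y$ that remains at distance at least $h/2$ from $\pr\dom$, and cover it by a chain of overlapping balls of radius comparable to $h$, arranged so each ball contains the center of the next. Applying the three-sphere inequality at each step and optimizing the exponent yields
\begin{equation*}
\|u\|_{L^2(B_{ch}(y))}\leq C'\bigl(\|u\|_{L^2(B_r(x_0))}+\epsilon\bigr)^{\theta^N}\bigl(\|u\|_{L^2(\dom)}+\epsilon\bigr)^{1-\theta^N},
\end{equation*}
where $N$ is the number of balls in the chain. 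Since $N$ is dominated by the maximal number of disjoint balls of radius $\sim h$ that fit in $\dom$, i.e.\ $N\leq C_2|\dom|/h^n$, one obtains the claimed lower bound $\delta\geq\tau^{C_2|\dom|/h^n}$ with $\tau=\theta$. Covering $D$ by $O(|\dom|/h^n)$ such balls and summing the resulting $L^2$ bounds produces the prefactor $(|\dom|/h^n)^{1/2}$, after a standard use of the concavity of $t\mapsto t^{\delta}$.

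The main obstacle, and the only genuinely new ingredient, is the three-sphere inequality at points $P\in\sig$. This reduces after flattening to a Carleman estimate for $\el_\gamma$ with a weight function that is pseudoconvex on both sides of $\{y=0\}$, whose normal derivative has a prescribed jump across the interface tuned to the conormal conductivities $a^{\pm}_{jk}\nu_j\nu_k$, and which absorbs the transmission trace terms produced by integration by parts on each half-ball. Constructing such a weight and justifying the resulting estimate for $H^1$ solutions (rather than $H^2$) is the delicate part; once this is available, the dependence on $h$, $r$ and $|\dom|$ in the final inequality is identical to that in the smooth-coefficient case, which is why the conclusion matches the classical form of \cite{ARRV}.
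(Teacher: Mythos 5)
Your chaining step is standard and matches the paper's final section, but the key lemma you invoke---a three-sphere inequality with concentric balls centered at a point $P\in\sig$, obtained ``by the usual cut-off and optimization procedure'' from a Carleman estimate for the transmission problem---is exactly the thing the paper \emph{avoids} proving, and for good reason. The Carleman estimates available for jump discontinuities across a hyperplane (the ones of \cite{CFLVW17}, used in \cite{FLVW}) require a weight whose gradient has a prescribed jump across $\{y=0\}$; after rescaling, its level sets are paraboloidal, scaling like $\theta$ in $y$ but $\theta^{1/2}$ in $x$, not spheres. Consequently the estimate yields the \emph{three-region} inequality of Theorem~\ref{thm-FLVW}, with $U_1$ lying entirely on one side of the interface, rather than a three-ball inequality centered on $\sig$. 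The paper explicitly notes that a three-ball inequality in this setting does exist in \cite{DCR19} but ``does not fit what we need'' because its constants do not have the scale-invariant form $C\sim(|\dom|/h^n)^{1/2}$, $\delta\geq\tau^{C_2|\dom|/h^n}$ required by Theorem~\ref{main-thm-1}; taming this scaling is the whole point of the $\theta$-parameter in Theorem~\ref{thm-FLVW-2}. Your sketch silently assumes both that the Carleman weight can be made radial and that the resulting constants are scale-invariant, and neither is established.

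The paper's actual route sidesteps your gap: it first proves an \emph{intermediate} propagation result (Theorem~\ref{propagation-thm}) under the extra hypothesis that the small ball lies strictly on one side, $B_r(x_0)\subset D_+$. That proof applies classical propagation (Theorem~\ref{propagation}) on $D_+$ away from $\sig$, then covers a tubular neighborhood $\sig^{\nu h}\cap D$ by finitely many pulled-back regions $\Psi_{P_j}^{-1}(\theta U_2)$ using Lemmas~\ref{lem-U1}--\ref{lem-U3} and a Vitali covering (with $U_1$ landing in the already-controlled region), and finally applies classical propagation again on $D_-$. Only then is a three-ball inequality (Theorem~\ref{three-balls}) derived, and its proof contains a step you would also need but did not mention: when $Q$ is near $\sig$, the inner ball $B_{r_1}(Q)$ is replaced by a smaller ball $B_{r_1/10}(Q')$ lying entirely in $\dom_+$ or $\dom_-$, so that the intermediate result applies, together with a three-way case split on the relative sizes of $r_1$, $r_3-r_2$, and $h_0$ to keep the constants in the stated form. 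In short: same chaining skeleton, but your central ingredient is asserted rather than obtainable from the known Carleman machinery, and the scale-invariance of the constants---essential for the theorem as stated---is not addressed.
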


We want to point out that the propagation of smallness we obtained is in the most general form regarding the locations of $D$ and $B_{r_0}(x_0)$, which may intersect the interface $\Sigma$. The strategy of proving Theorem~\ref{main-thm-1} consists two parts.  When we are at one side of the interface, we can use the usual propagation of smallness for equations with Lipschitz coefficients. When we near the interface, we then use the three-region inequality derived in \cite{FLVW}. The three-region inequality of \cite{FLVW} is used to propagate the smallness across the interface. 

The rest of the paper is organized as follows. In section \ref{KNOWN} we  recall two known results. The first is a propagation of smallness result \cite[Theorem 5.1]{ARRV} analogous to our own, in the case of Lipschitz leading order coefficients. The second is a ``three-region inequality''  \cite[Theorem 3.1]{FLVW} for leading order coefficients which are Lipschitz except across a $C^2$ hypersurface. The rest of the section is concerned with extending the three regions inequality to a slightly richer family of regions.

In section \ref{PROPAGATION} we use the three-region inequality we have established in section \ref{KNOWN} to prove a propagation of smallness result with somewhat worse constants than the ones in Theorem \ref{main-thm-1}. Then in section \ref{PROOF} we use this intermediate propagation of smallness result to prove Theorem \ref{main-thm-1}.

Finally, in section \ref{CONSEQUENCES}, following \cite{ARRV}, \cite{MV}, and \cite{RS}, we list  a few consequences of our main result. These are given without  proofs, as these would be identical to the ones given in \cite{ARRV}, \cite{MV}, and \cite{RS}.

\section{Known results and extensions}\label{KNOWN}

In this section we  recall the propagation of smallness result for Lipschitz leading order coefficients established in \cite{ARRV} and the three-region inequality proved in \cite{FLVW} for leading order coefficients that are Lipschitz except on a plane that intersects the domain. We then state and prove an extension of the three-region inequality which will introduce a scaling parameter to the family of regions for which the inequality applies. 

\subsection{
Lipschitz leading order coefficients}

Assume that
$
\left((a_{jk})_{jk},  q  \right)\in \mathscr{V}_0(U,\lambda,M,K_1).
$
We then have the following.

\begin{thm}[{\cite[Theorem 5.1]{ARRV}}]\label{propagation}
Let $u\in H^1(U)$ be a weak solution to the elliptic equation
\begin{equation*}
\el_\gamma u=f+\nabla \cdot F,\quad \|f\|_{L^2(U)}+\|F\|_{L^2(U)}\leq\epsilon.
\end{equation*}
Let $0<h< r/2$, $D\subset U$ connected, open, and such that $B_{\frac{r}{2}}(x_0)\subset D$, $\mbox{\rm dist} (D,\pr U)\geq h$. Then
\begin{equation*}
\|u\|_{L^2(D)}\leq C (\|u\|_{L^2(B_r(x_0))}+\epsilon)^\delta(\|u\|_{L^2(U)}+\epsilon)^{1-\delta},
\end{equation*}
where
\begin{equation*}
C=C_1\left(\frac{|U|}{h^n}\right)^{\frac{1}{2}},\quad \delta\geq\tau^{\frac{C_2|U|}{h^n}},
\end{equation*}
with $C_1, C_2>0$, $\tau\in(0,1)$ depending on $\lambda$, $M$, $K_1$.
\end{thm}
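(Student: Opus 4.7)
The plan is to follow the classical route: a \emph{Carleman estimate} yields a \emph{three-ball inequality}, which is iterated along a \emph{chain of balls} to propagate smallness across $D$, producing the stated quantitative constants.

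First, I would establish a local Carleman estimate of the form
\begin{equation*}
\sigma^3\int e^{2\sigma\phi}|u|^2\,dx + \sigma \int e^{2\sigma\phi}|\nabla u|^2\,dx \leq C \int e^{2\sigma\phi}|\el_\gamma u|^2 \,dx
\end{equation*}
for $u \in C_c^\infty$ supported in a small ball around a fixed $y_0 \in U$, valid for $\sigma > \sigma_0(\lambda, M, K_1)$. The weight $\phi$ is chosen radial around $y_0$ (roughly $\phi(x) = -\log|x-y_0|$, suitably smoothed) so that the principal symbol of $\el_\gamma$ satisfies the pseudo-convexity condition uniformly; uniform ellipticity together with the Lipschitz bound on $a_{jk}$ ensures that the first-order commutators arising from integration by parts can be absorbed on the left once $\sigma$ is taken large enough, and the zeroth order $q u$ is absorbed by the $\sigma^3$ term. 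The forcing $f + \nabla \cdot F$ is incorporated by integrating the $\nabla \cdot F$ piece against the weight once, producing at most an extra factor of $\sigma$ from $\pr e^{\sigma \phi}$, and ultimately contributing an $\ep$-controlled term on the right-hand side.

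Second, I would apply the Carleman estimate to $\chi u$, where $\chi$ is a cut-off equal to $1$ on $B_{r_2}(x_0)$ and supported in $B_{r_3}(x_0)$. Splitting the right-hand side into a genuine forcing piece controlled by $\ep$ and an annular commutator piece supported where $\nabla\chi\neq 0$, then restricting the left-hand side to $B_{r_1}(x_0)$ where the weight is largest, and finally optimizing in $\sigma$ (after a standard Caccioppoli inequality converts the gradient norms into $L^2$ norms on slightly enlarged balls), yields the \emph{three-ball inequality}
\begin{equation*}
\|u\|_{L^2(B_{r_2}(x_0))} \leq C (\|u\|_{L^2(B_{r_1}(x_0))}+\ep)^{\theta_0}(\|u\|_{L^2(B_{r_3}(x_0))}+\ep)^{1-\theta_0}
\end{equation*}
with $\theta_0\in(0,1)$ depending only on the radius ratios and on $\lambda,M,K_1$.

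To obtain the theorem, pick any $y\in D$. Since $D$ is connected and $\mathrm{dist}(D,\pr U)\geq h$, I can build a chain of $N$ overlapping balls of radius $\sim h$, all contained in $U$, joining $x_0$ to $y$, with consecutive balls overlapping sufficiently that the three-ball inequality applies with a uniform exponent $\theta_0$. A volume-packing argument bounds $N\leq C_2|U|/h^n$. Iterating the three-ball inequality along the chain gives a local $L^2$ estimate near $y$ with Hölder exponent $\theta_0^N\geq \tau^{C_2|U|/h^n}$ where $\tau:=\theta_0$. Covering $D$ by at most $\lesssim |U|/h^n$ such balls of radius $\sim h$ and summing (using that for $a,b\geq 0$ and $\alpha\in[0,1]$, $a^\alpha b^{1-\alpha}\leq a+b$) produces the $L^2(D)$ bound with prefactor $C_1(|U|/h^n)^{1/2}$ as in the statement.

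The main technical obstacle is the Carleman estimate itself: keeping the constants uniform with respect to only the \emph{Lipschitz} (not $C^1$) norms of the leading coefficients, and explicit in $\lambda, M, K_1$, requires a careful weight construction so that pseudo-convexity holds uniformly and so that all commutators generated by $\nabla a_{jk}$ (which is merely in $L^\infty$) can be absorbed. Once that estimate is in place, the three-ball inequality and the geometric chain-of-balls iteration are routine; the quantitative form $\delta\geq \tau^{C_2|U|/h^n}$ then falls out automatically from counting the number of iterations.
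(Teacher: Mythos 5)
This statement is quoted verbatim from \cite{ARRV} (Theorem 5.1 there) and the paper supplies no proof of its own, so the comparison is with the cited source: your route --- a Carleman estimate with a logarithmic-type weight, a resulting three-ball inequality, and a chain-of-balls iteration with a volume-packing count giving $N\lesssim |U|/h^n$ and hence $\delta\geq\tau^{C_2|U|/h^n}$ and the prefactor $C_1(|U|/h^n)^{1/2}$ --- is essentially the standard argument behind that result, and it also mirrors the chaining the paper itself uses to deduce Theorem \ref{main-thm-1} from Theorem \ref{three-balls}. The one loose point is the treatment of the datum $\nabla\cdot F$ with $F$ merely in $L^2$: threading it through the Carleman estimate requires an $H^{-1}$-type variant with the correct powers of $\sigma$, and it is cleaner to subtract the $H^1_0$ solution $u_0$ of $\el_\gamma u_0=f+\nabla\cdot F$, which satisfies $\|u_0\|_{L^2}\leq C\epsilon$, and apply the homogeneous estimate to $u-u_0$, exactly as the paper does in Corollary \ref{cor-FLVW-2}; this is a presentational simplification rather than a gap in your outline.
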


\subsection{
Piecewise Lipschitz leading order coefficients}

Let
\begin{equation*}
\R_+^n=\left\{(x,y)\in\R^{n-1}\times\R:y>0\right\},
\end{equation*}
\begin{equation*}
\R_-^n=\left\{(x,y)\in\R^{n-1}\times\R:y<0\right\},
\end{equation*}
and assume that
\begin{equation*}
\tilde \gamma^\pm=\left((\tilde a^\pm_{jk})_{jk},(\tilde b_j)_j,  \tilde q  \right)\in \mathscr{V}(\R^n_\pm,\lambda,M,K_1).
\end{equation*}

\begin{thm}[{\cite[Theorem 3.1]{FLVW}}]\label{thm-FLVW}
There exist $\alpha_\pm$, $\delta_0$, $\tau_0$, $\beta$, $C$, $R$ positive constants depending on $\lambda$, $M$, $K_1$, $K_2$, such that if $0<\delta<\delta_0$, $0<R_1, R_2\leq R$, and 
\begin{equation*}
\el_{\tilde\gamma} u=0,\text{ in } U_3,
\end{equation*}
 then
\begin{equation*}
\int_{U_2}|u|^2\leq (e^{\tau_0 R_2}+CR_1^{-4})
\left(\int_{U_1} |u|^2\right)^{\frac{R_2}{2R_1+3R_2}}
\left(\int_{U_3} |u|^2\right)^{\frac{2R_1+2R_2}{2R_1+3R_2}},
\end{equation*}
where
\begin{equation*}
U_1=\{ z\geq-4R_2,\frac{R_1}{8a}<y<\frac{R_1}{a}\},
\end{equation*}
\begin{equation*}
U_2=\{-R_2\leq z\leq \frac{R_1}{2a}, y<\frac{R_1}{8a}\},
\end{equation*}
\begin{equation*}
U_3=\{z\geq-4R_2, y<\frac{R_1}{a}\},
\end{equation*}
\begin{equation*}
a=\frac{\alpha_+}{\delta},\quad 
z(x,y)=\frac{\alpha_-y}{\delta}+\frac{\beta y^2}{2\delta^2}-\frac{|x|^2}{2\delta}.
\end{equation*}
\end{thm}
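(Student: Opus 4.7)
The plan is to prove the inequality via a Carleman estimate for $\el_{\tilde\gamma}$ across the flat interface $\{y=0\}$ combined with a three-region cut-off and optimization in the Carleman parameter.

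First I would establish a Carleman estimate in a neighborhood of the origin with a weight built from $z(x,y)$, of the schematic form
\[
\tau \int e^{2\tau \phi}|\nabla u|^2 + \tau^3 \int e^{2\tau \phi}|u|^2 \leq C \int e^{2\tau \phi}|\el_{\tilde\gamma} u|^2,
\]
for $u$ compactly supported near $0$ and matched across $\{y=0\}$ by the natural transmission conditions. The bulk estimate must be proved separately on $\R^n_+$ and $\R^n_-$, after which the interface contribution at $\{y=0\}$ produced by integration by parts must be controlled. The specific form of $z$ is dictated by two requirements: H\"ormander-type pseudo-convexity for each Lipschitz operator on $\R^n_\pm$, ensured by the tangential term $-|x|^2/(2\delta)$ and the convexifying $\beta y^2/(2\delta^2)$; and the condition that, after integration by parts, the surface integral at $\{y=0\}$ has a favorable sign. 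The two independent constants $\alpha_+,\alpha_-$ in the linear-in-$y$ part of $z$ are there precisely to arrange this sign in the presence of possibly jumping leading coefficients, and the $\delta$-scaling makes $z$ locally large on a $\delta$-scale while keeping the Carleman constants uniform.

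Second, I would localize with a cut-off $\chi$ supported in $U_3$ and equal to $1$ on $U_2$, whose gradient is supported in two pieces: one inside $U_1$ (where $y$ lies in the upper band $R_1/(8a)<y<R_1/a$) and one along the level set $\{z\approx -4R_2\}$. Applied to $\chi u$, since $\el_{\tilde\gamma} u = 0$, the right-hand side reduces to a first-order expression in $u$ on $\supp\nabla\chi$. Using that $z \geq -R_2$ on $U_2$, that $z$ is bounded above by an $R_1$-dependent constant on the $U_1$-piece, and that $z\leq -4R_2$ on the lower piece, one extracts a two-term bound
\[
e^{2\tau c_0}\|u\|_{L^2(U_2)}^2 \leq C R_1^{-4} e^{2\tau c_1}\|u\|_{L^2(U_1)}^2 + C e^{2\tau c_2}\|u\|_{L^2(U_3)}^2,
\]
with $c_2 < c_0 < c_1$, the $R_1^{-4}$ coming from the $\|\Delta\chi\|_{L^\infty}^2$-type cost of a cut-off transitioning over scale $R_1/a$.

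Third, optimizing in $\tau$ so that the two right-hand terms balance produces a H\"older-interpolation inequality with exponents $(c_0-c_2)/(c_1-c_2)$ on $U_1$ and $(c_1-c_0)/(c_1-c_2)$ on $U_3$; checking that these evaluate to $R_2/(2R_1+3R_2)$ and $(2R_1+2R_2)/(2R_1+3R_2)$ given the geometry of $U_1,U_2,U_3$ yields the stated exponents. When the optimal $\tau$ falls below the admissibility threshold $\tau_0$, one sets $\tau=\tau_0$ and absorbs the resulting loss into the prefactor $e^{\tau_0 R_2}+CR_1^{-4}$. The main obstacle will be the Carleman estimate itself: the weight is nonstandard and degenerates as $\delta\to 0$, so pseudo-convexity must be verified with constants uniform in small $\delta$, and $\alpha_\pm,\beta$ must be tuned in terms of the jumps of the leading coefficients so that the $\{y=0\}$-integrals combine with the correct sign. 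Once this analytic core is in place, the cut-off and optimization steps follow a well-established pattern.
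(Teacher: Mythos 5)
This is a quoted result (\cite[Theorem~3.1]{FLVW}); the paper gives no proof of its own, noting only that the hypothesis ``solution in $\R^n$'' may be relaxed to ``solution in $U_3$'' and that the argument rests on the Carleman estimate derived in \cite{CFLVW17} (or \cite{RR10}). Your outline --- a transmission Carleman estimate with weight built from $z$, pseudo-convexity checked separately on $\R^n_\pm$ with $\alpha_\pm$ tuned to give a favorable sign at $\{y=0\}$, a cut-off equal to $1$ on $U_2$ and supported in $U_3$, and optimization in the Carleman parameter to produce the H\"older exponents --- is precisely the strategy of the cited reference, so the plan is sound; the substantive difficulty you correctly flag (the interface Carleman inequality with constants uniform as $\delta\to 0$) is exactly what is imported from \cite{CFLVW17} rather than reproved here.
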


Note that in \cite{FLVW}, the function $u$ is required to be a solution in $\R^n$. It is however clear from their proof that it only needs to solve the equation in $U_3$. The proof of the three-region inequality is based on the Carleman estimate derived in \cite{CFLVW17} (or \cite{RR10}).

\subsection{Scaling the three regions}

When trying to prove a propagation of smallness result, the family of regions given in Theorem \ref{thm-FLVW} has one important drawback, namely that if we choose the parameters $R_1=\theta \bar R_1$, $R_2=\theta\bar R_2$, $\theta\in(0,1)$, the vertical (i.e. $y$-direction) size of the regions would scale like $\theta$, while their horizontal (i.e. $x$-direction) size would scale like $\theta^{\frac{1}{2}}$. Using just these two parameters in the proof would then lead to constants in the propagation of smallness inequality (i.e. the constants $C$ and $\delta$ in Theorem \ref{main-thm-1}) that depend on the geometry of $\dom$, $D$, and $B_{r}(x_0)$ in a way that is not invariant under a rescaling of these sets. 

In order to derive a propagation of smallness result that is more closely analogous to \cite[Theorem 5.1]{ARRV}, we need to introduce another parameter to the family of three regions. 

Assume that
\begin{equation*}
\tilde \gamma^\pm=\left((\tilde a^\pm_{jk})_{jk},(\tilde b_j)_j,  \tilde q  \right)\in \mathscr{V}(\R^n_\pm,\lambda,M,K_1, K_2).
\end{equation*}
For $0<\theta\leq 1$, let
\begin{multline*}
\mathcal{L}^\theta_{\tilde\gamma} v=\sum_{jk}\left[\chi_{R^n_+}\pr_j(\tilde a^+_{jk}(\theta\cdot)\pr_k v)+\chi_{R^n_-}\pr_j(\tilde a^-_{jk}(\theta\cdot)\pr_k v)\right]\\[5pt]
+\sum_j \theta \tilde b_j(\theta\cdot)\pr_j v+\theta^2 \tilde q(\theta\cdot)v.
\end{multline*}
Note that if $\left( (\tilde a^\pm_{jk})_{j,k=1}^n,(\tilde b_j)_{j=1}^n,\tilde q)  \right)\in\mathscr{V}(\R^{n}_\pm, \lambda,M,K_1,K_2)$, then
\begin{equation*}
\left( (\tilde a^\pm_{jk}(\theta\cdot))_{j,k=1}^n,(\theta \tilde b_j(\theta\cdot))_{j=1}^n,\theta^2 \tilde q(\theta\cdot)  \right)\in\mathscr{V}(\R^{n}_\pm, \lambda,\theta M,\theta^2 K_1, \theta K_2).
\end{equation*}
Suppose $\el_{\tilde\gamma}=0$ in $\dom$, and let
\begin{equation*}
u^\theta(x)=\theta^{-2}u(\theta x).
\end{equation*}
Then
\begin{equation*}
\mathcal{L}^\theta_{\tilde\gamma} u^\theta=0,\text{ in }\theta^{-1}\dom. 
\end{equation*}

If $U\subset \theta^{-1}\dom$, note that
\begin{equation*}
\int_{\theta U}|u(x)|^2\dd x=\theta^{n+4}\int_U |u^\theta(y)|^2\dd y.
\end{equation*}
We therefore obtain, by scaling, the following corollary to Theorem \ref{thm-FLVW}.
\begin{thm}\label{thm-FLVW-2}
If $0<\delta<\delta_0$, $0<R_1, R_2\leq R$, $\theta\in (0,1]$, and 
\begin{equation*}
\el_{\tilde\gamma} u=0,\text{ in } \theta U_3,
\end{equation*}
 then 
\begin{equation*}
\int_{\theta U_2}|u|^2\leq (e^{\tau_0 R_2}+CR_1^{-4})
\left(\int_{\theta U_1} |u|^2\right)^{\frac{R_2}{2R_1+3R_2}}
\left(\int_{\theta U_3} |u|^2\right)^{\frac{2R_1+2R_2}{2R_1+3R_2}}.
\end{equation*}
\end{thm}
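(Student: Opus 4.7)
The plan is to reduce Theorem \ref{thm-FLVW-2} to Theorem \ref{thm-FLVW} by the scaling argument already set up in the paragraph preceding the statement. Defining $u^\theta(x)=\theta^{-2}u(\theta x)$, a direct chain-rule computation shows that $(\mathcal{L}_{\tilde\gamma}u)(\theta y)=(\mathcal{L}^\theta_{\tilde\gamma}u^\theta)(y)$, since each $y$-derivative of $u^\theta$ absorbs one factor of $\theta$ from the $\tilde b_j$ coefficient and two factors from $\tilde q$. Consequently, the hypothesis $\mathcal{L}_{\tilde\gamma}u=0$ on $\theta U_3$ is equivalent to $\mathcal{L}^\theta_{\tilde\gamma}u^\theta=0$ on $U_3$. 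Moreover, since $\theta\in(0,1]$, the rescaled coefficients lie in $\mathscr{V}(\R^n_\pm,\lambda,\theta M,\theta^2 K_1,\theta K_2)\subset\mathscr{V}(\R^n_\pm,\lambda,M,K_1,K_2)$, so Theorem \ref{thm-FLVW} applies to $u^\theta$ with the \emph{same} constants $\delta_0,\tau_0,R,C$.

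Applying Theorem \ref{thm-FLVW} to $u^\theta$ on the unscaled regions $U_1,U_2,U_3$ yields
\[
\int_{U_2}|u^\theta|^2\leq (e^{\tau_0 R_2}+CR_1^{-4})\left(\int_{U_1}|u^\theta|^2\right)^{\frac{R_2}{2R_1+3R_2}}\left(\int_{U_3}|u^\theta|^2\right)^{\frac{2R_1+2R_2}{2R_1+3R_2}}.
\]
The next step is to translate this back to $u$ using the change-of-variables identity $\int_{\theta U_i}|u|^2=\theta^{n+4}\int_{U_i}|u^\theta|^2$ recorded in the excerpt. The key algebraic fact is that the two exponents $\frac{R_2}{2R_1+3R_2}$ and $\frac{2R_1+2R_2}{2R_1+3R_2}$ sum to $1$, so the prefactor $\theta^{n+4}$ introduced on the left distributes exactly between the two integrals on the right as $(\theta^{n+4})^{R_2/(2R_1+3R_2)}(\theta^{n+4})^{(2R_1+2R_2)/(2R_1+3R_2)}$, producing the claimed inequality with each $U_i$ replaced by $\theta U_i$.

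There is no substantive obstacle; the only point requiring care is checking that the constants $\delta_0,\tau_0,R,C$ from Theorem \ref{thm-FLVW} are truly uniform over the class $\mathscr{V}(\R^n_\pm,\lambda,M,K_1,K_2)$, so that rescaling the coefficients does not degrade them. This follows directly from the dependence asserted in the statement of Theorem \ref{thm-FLVW}, so the scaling argument gives an inequality uniform in $\theta\in(0,1]$.
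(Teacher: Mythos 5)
Your proposal is correct and takes essentially the same route as the paper: the paper itself only says the result follows "by scaling," after having set up the rescaled operator $\mathcal{L}^\theta_{\tilde\gamma}$, the rescaled solution $u^\theta$, the membership of the rescaled coefficients in $\mathscr{V}(\R^n_\pm,\lambda,\theta M,\theta^2 K_1,\theta K_2)\subset\mathscr{V}(\R^n_\pm,\lambda,M,K_1,K_2)$, and the change-of-variables identity $\int_{\theta U}|u|^2=\theta^{n+4}\int_U|u^\theta|^2$. You correctly spell out the two points the paper leaves implicit, namely that $(\mathcal{L}_{\tilde\gamma}u)(\theta\cdot)=\mathcal{L}^\theta_{\tilde\gamma}u^\theta$ (so the PDE hypothesis transfers from $\theta U_3$ to $U_3$) and that the exponents $\frac{R_2}{2R_1+3R_2}$ and $\frac{2R_1+2R_2}{2R_1+3R_2}$ sum to $1$, which makes the $\theta^{n+4}$ Jacobian factor distribute cleanly across the right-hand side and cancel. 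The one cosmetic flaw is the sentence explaining the chain-rule identity: it is not that the derivative "absorbs two factors from $\tilde q$" (there is no derivative in the zeroth-order term); rather the $\theta^{-2}$ in the definition of $u^\theta$ cancels the $\theta^2$ in front of $\tilde q$, while each derivative $\pr_j u^\theta=\theta^{-1}(\pr_j u)(\theta\cdot)$ cancels the single $\theta$ in front of $\tilde b_j$ and, after a further $\pr_j$, leaves the principal part invariant. The computation itself, and hence the proof, is sound.
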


For $u$ a solution to an inhomogeneous equation, we  easily have a similar result.
\begin{cor}\label{cor-FLVW-2}
Under the assumptions above, if 
\begin{equation*}
\el_{\tilde\gamma} u=f+\nabla \cdot F,\text{ in }\theta U_3,
\end{equation*}
\begin{equation*}
\|f\|_{L^2(\dom)}+\|F\|_{L^2(\dom)}\leq\epsilon,
\end{equation*}
then  
\begin{multline*}
\|u\|_{L^2(\theta U_2)} \leq C (e^{\tau_0 R_2}+CR_1^{-4})^{\frac{1}{2}}\\[5pt] 
\times \left( \|u\|_{L^2(\theta U_1)}+\epsilon\right)^{\frac{R_2}{2R_1+3R_2}}  \left(\|u\|_{L^2(\theta U_3)}+\epsilon\right)^{\frac{2R_1+2R_2}{2R_1+3R_2}}.
\end{multline*}
\end{cor}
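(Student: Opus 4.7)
The plan is to reduce the inhomogeneous case to Theorem \ref{thm-FLVW-2} by the standard subtraction trick: write $u=v+w$, where $v$ solves the homogeneous equation on $\theta U_3$ and $w$ absorbs the source $f+\nabla\cdot F$ while being controlled by $\epsilon$ in $L^2$.

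First, I would construct $w\in H^1_0(\theta U_3)$ solving $\el_{\tilde\gamma}w=f+\nabla\cdot F$ in $\theta U_3$. Since $\theta U_3$ is a bounded Lipschitz domain and the coefficients $\tilde\gamma^\pm$ satisfy uniform ellipticity and $L^\infty$ bounds, classical elliptic theory produces such a $w$ together with the energy estimate
\begin{equation*}
\|w\|_{H^1(\theta U_3)}\leq C(\|f\|_{L^2(\theta U_3)}+\|F\|_{L^2(\theta U_3)})\leq C\epsilon,
\end{equation*}
where $C$ depends only on $\lambda$, $M$, $K_1$, $K_2$, and the size of $U_3$ (which is uniformly bounded by the hypotheses $R_1,R_2\leq R$ and $\delta\leq\delta_0$). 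If $\tilde q$ is too large for the Lax--Milgram form to be coercive, the standard remedy is to shift $\tilde q$ by a sufficiently large constant $\mu$ and absorb the resulting lower-order term $\mu w$ into the right-hand side, iterating or using a fixed-point argument. In any case, the $L^2$ bound $\|w\|_{L^2(\theta U_3)}\leq C\epsilon$ is all that is needed below.

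Next, set $v=u-w$. Then $v\in H^1(\theta U_3)$ and $\el_{\tilde\gamma}v=0$ in $\theta U_3$, so Theorem \ref{thm-FLVW-2} applies:
\begin{equation*}
\|v\|_{L^2(\theta U_2)}\leq (e^{\tau_0 R_2}+CR_1^{-4})^{1/2}\|v\|_{L^2(\theta U_1)}^{\frac{R_2}{2R_1+3R_2}}\|v\|_{L^2(\theta U_3)}^{\frac{2R_1+2R_2}{2R_1+3R_2}}.
\end{equation*}
The triangle inequality yields $\|v\|_{L^2(\theta U_j)}\leq \|u\|_{L^2(\theta U_j)}+C\epsilon$ for $j=1,3$ and $\|u\|_{L^2(\theta U_2)}\leq \|v\|_{L^2(\theta U_2)}+C\epsilon$. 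Substituting these into the above inequality produces the desired bound up to an extra additive $C\epsilon$ on the right-hand side. Because the two exponents $\tfrac{R_2}{2R_1+3R_2}$ and $\tfrac{2R_1+2R_2}{2R_1+3R_2}$ sum to $1$, one has
\begin{equation*}
\epsilon\leq (\|u\|_{L^2(\theta U_1)}+\epsilon)^{\frac{R_2}{2R_1+3R_2}}(\|u\|_{L^2(\theta U_3)}+\epsilon)^{\frac{2R_1+2R_2}{2R_1+3R_2}},
\end{equation*}
so this leftover $C\epsilon$ can be absorbed into the multiplicative prefactor.

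There is no substantive obstacle here; the only point requiring care is the well-posedness and $L^2$-estimate for the auxiliary Dirichlet problem, which is standard. The rest is a mechanical application of Theorem \ref{thm-FLVW-2} together with the triangle inequality.
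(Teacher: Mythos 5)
Your proposal is correct and follows essentially the same route as the paper: subtract an $H^1_0(\theta U_3)$ solution $u_0$ of the inhomogeneous equation with $\|u_0\|_{L^2(\theta U_3)}\leq C\epsilon$, apply Theorem \ref{thm-FLVW-2} to $u-u_0$, and conclude by the triangle inequality. The paper states this tersely; your additional remarks about coercivity of the Dirichlet form and about absorbing the leftover $C\epsilon$ (using that the two exponents sum to $1$) only spell out details the paper leaves implicit.
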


\begin{proof}
Let $u_0\in H^1_0(\theta U_3)$ be the solution to 
\begin{equation*}
\el_{\tilde\gamma} u_0=f+\nabla \cdot F.
\end{equation*}
Then 
\begin{equation*}
\|u_0\|_{L^2(\theta U_3)}\leq C(\|f\|_{L^2(\dom)}+\|F\|_{L^2(\dom)})\leq C\epsilon.
\end{equation*}
Since
\begin{equation*}
\el(u-u_0)=0
\end{equation*}
we can apply Theorem \ref{thm-FLVW-2} to $u-u_0$ and the claim follows immediately.
\end{proof}

\section{An intermediate propagation of smallness result}\label{PROPAGATION}

In this section we  assume that  $D\subset\subset\dom$ is  open and connected,  that $\sig\subset\dom$ is a $C^2$ hypersurface with constants $r_0$ and $K_0$, and  that $\dom\setminus\sig$ and $D\setminus{\sig}$ both have two connected components each, denoted by $\dom_\pm$ and $D_\pm$ respectively. We will consider coefficients
\begin{equation*}
\gamma:=\left((a^\pm_{jk})_{jk},  q  \right)\in \mathscr{V}_0(\dom_\pm,\lambda,M,K_1).
\end{equation*}
We can now prove the following propagation of smallness result. 

\begin{thm}\label{propagation-thm}
Suppose $u\in H^1(\dom)$ solves
\begin{equation*}
\el_{\gamma} u=f+\nabla \cdot F,\quad \|f\|_{L^2(\dom)}+\|F\|_{L^2(\dom)}\leq\epsilon.
\end{equation*}
Then there exist $h_0>0$ depending on $\lambda$, $M$, $K_1$, $r_0$, $K_0$, $\sig$, such that if $0<h<h_0$, $h< r/2$,  $B_{r}(x_0)\subset D_+$, and $\mbox{\rm dist} (D,\pr\dom)\geq h$, then 
\begin{equation*}
\|u\|_{L^2(D)}\leq C (\|u\|_{L^2(B_r(x_0))}+\epsilon)^\delta(\|u\|_{L^2(\dom)}+\epsilon)^{1-\delta},
\end{equation*}
where
\begin{equation*}
C=C_1\left(\frac{|\dom|}{h^{n}}\right)\left[1+\left(\frac{|\sig\cap \dom|}{h^{n-1}} \right)^\frac{1}{2}\right],\quad \delta\geq\tau^{\frac{C_2|\dom|}{h^{n}}},
\end{equation*}
with $C_1, C_2>0$, $\tau\in(0,1)$ depending on $\lambda$, $M$, $K_1$, $r_0$, $K_0$.
\end{thm}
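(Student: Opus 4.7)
The plan is to chain two kinds of local estimates. Away from the interface $\sig$ the coefficients of $\el_\gamma$ are Lipschitz, so the classical propagation of smallness of Theorem \ref{propagation} applies inside $\dom_+$ or $\dom_-$ separately, provided one stays at distance $\gtrsim h$ from $\sig\cup\pr\dom$. To bridge the $h$-strip around $\sig$ I would use Corollary \ref{cor-FLVW-2}, the rescaled three-region inequality, applied at finitely many points of $\sig$. Since by hypothesis the starting ball $B_r(x_0)$ lies entirely in $D_+$, the argument has three natural phases: propagate within $D_+$ from $B_r(x_0)$ up to a collar of $\sig$; cross $\sig$ at each of a family of interface points; and propagate within $D_-$ to the remainder of $D$.

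For the crossings, fix points $P_1,\ldots,P_N\in\sig$ whose $Ch$-balls cover an $h$-neighborhood of $\sig\cap D$, with $N\lesssim|\sig\cap\dom|/h^{n-1}$. Near each $P_i$ the flattening diffeomorphism $\Psi_{P_i}(x,y)=(x,y-\psi(x))$ locally sends $\sig$ to $\{y=0\}$, and the pulled-back coefficients stay in $\mathscr{V}(\R^n_\pm,\lambda',M',K_1')$ with constants depending only on $\lambda$, $M$, $K_1$, $K_0$. Choosing $\theta$ comparable to $h$ and $R_1,R_2$ fixed (bounded by $R$), and requiring $h<h_0$ small enough that the pullback of $\theta U_3$ fits inside $C_{r_0,K_0}(P_i)$, Corollary \ref{cor-FLVW-2} yields the local estimate
\begin{equation*}
\|u\|_{L^2(V_2^i)}\leq C_*\left(\|u\|_{L^2(V_1^i)}+\epsilon\right)^{\alpha}\left(\|u\|_{L^2(V_3^i)}+\epsilon\right)^{1-\alpha},
\end{equation*}
where $V_j^i:=\Psi_{P_i}^{-1}(\theta U_j)$, the exponent $\alpha\in(0,1)$ and the prefactor $C_*$ are independent of $P_i$, $V_1^i\subset D_+$, $V_2^i$ crosses $\sig$ with $|V_2^i\cap D_-|\gtrsim h^n$, and $V_3^i\subset\dom$ by the distance assumption on $D$. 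Theorem \ref{propagation} on $\dom_+$ then controls $\|u\|_{L^2(V_1^i)}$ by $\|u\|_{L^2(B_r(x_0))}$ and $\|u\|_{L^2(\dom)}$; the displayed inequality pushes this bound through to $V_2^i$; and Theorem \ref{propagation} on $\dom_-$, started from a ball inside $V_2^i\cap D_-$, extends the smallness to all of $D_-$.

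The main obstacle is the bookkeeping of constants and exponents through the chain. Each classical propagation contributes a factor of order $(|\dom|/h^n)^{1/2}$ and an exponent $\tau^{C|\dom|/h^n}$, while each of the $N$ crossings multiplies the running prefactor by a factor of order $C_*^{1/2}$ and multiplies the running H\"older exponent by a fixed constant in $(0,1)$. Because $N\lesssim|\sig\cap\dom|/h^{n-1}$, the compounded multiplicative constant can be absorbed into the claimed $C_1(|\dom|/h^n)[1+(|\sig\cap\dom|/h^{n-1})^{1/2}]$, while the accumulated H\"older exponent still satisfies $\delta\geq\tau^{C_2|\dom|/h^n}$. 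The threshold $h_0$ is chosen small enough (in terms of $r_0$, $K_0$, and $\sig$) that every rescaled region $V_j^i$ lies in the valid $C^2$ chart around $P_i$ and the scaling constraints $\theta\in(0,1]$, $R_1,R_2\leq R$ of Corollary \ref{cor-FLVW-2} are satisfied at each $i$.
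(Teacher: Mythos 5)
Your proposal follows essentially the same route as the paper: propagate classically from $B_r(x_0)$ inside the Lipschitz region to a collar near $\sig$, bridge a strip of width $\sim h$ around $\sig$ by applying the rescaled three-region inequality of Corollary \ref{cor-FLVW-2} at a family of interface points $P_1,\dots,P_N$ with $\theta\sim h$ and $N\lesssim|\sig\cap\dom|/h^{n-1}$ (the paper uses Vitali covering with balls built from Lemmas \ref{lem-U1}--\ref{lem-U3}), then propagate classically again on the $\dom_-$ side starting from a ball inside one of the crossed regions, and finally combine the three $L^2$ estimates.

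One piece of bookkeeping in your write-up is phrased ambiguously and, taken literally, is wrong. You write that ``each of the $N$ crossings multiplies the running prefactor by a factor of order $C_*^{1/2}$ and multiplies the running H\"older exponent by a fixed constant,'' which suggests the $N$ three-region inequalities are chained \emph{in series}. That would produce a prefactor of order $C_*^{N/2}$, exponential in $|\sig\cap\dom|/h^{n-1}$, which cannot be absorbed into $C_1(|\dom|/h^n)\bigl[1+(|\sig\cap\dom|/h^{n-1})^{1/2}\bigr]$. The correct (and intended by the paper) structure is \emph{parallel}: each $\|u\|_{L^2(V_2^i)}$ is estimated from the \emph{same} $\|u\|_{L^2(\tilde D)}$ obtained by the classical propagation on the $+$ side, so the whole chain has only three links (classical $\to$ one application of the three-region inequality $\to$ classical), and summing the $N$ local estimates over the cover of $\sig^{\nu h}\cap D$ produces only a $\sqrt{N}\sim (|\sig\cap\dom|/h^{n-1})^{1/2}$ factor in the prefactor while the H\"older exponent is multiplied by a single fixed $\xi\in(0,1)$, not by $\xi^N$. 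With that clarification, the exponent accumulates as $\delta_-\,\xi\,\delta_+\ge\tau^{C_2|\dom|/h^n}$ and the prefactor matches the claim. Also, a small correction: the regions $V_1^i=\Psi_{P_i}^{-1}(\theta U_1)$ need not lie inside $D_+$ itself (they may protrude into $\dom_+\setminus D_+$), which is why the paper runs the first classical propagation on the slightly enlarged set $(D_+)^{h/2}\setminus(\sig^{\nu h}\cup\dom_-)$ rather than $D_+$; this does not change the order of the constants.
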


The difficult part of the proof is obtaining $L^2$ estimates of the solution in  a neighborhood of $\sig$. We will use Corollary \ref{cor-FLVW-2} above in order to accomplish this. In order to adapt that result to the possibly curved surface $\sig$, we need to first consider how the three regions transform under the local boundary straightening diffeomorphisms $\Psi_P$.

\subsection{Preimages of the three regions}

Pick a point $P\in\sig$ and set $P=0$ without loss of generality. Let  $(x,y)\in C_{r_0,K_0}(0)$. We will try to determine when $(x,y)\in\Psi^{-1}_P(\theta U_2)$. To this end, we introduce the notation
\begin{equation*}
x'=x,\quad y'=y-\psi(x).
\end{equation*}
It is clear that $(x,y)\in\Psi^{-1}_P(\theta U_2)$ if and only if $\theta^{-1}(x',y')\in U_2$. Because we expect the condition on the size of $(x,y)$ to be approximately of order $\theta$, we also introduce
\begin{equation*}
x''=\frac{x}{\theta},\quad y''=\frac{y}{\theta},
\end{equation*}
and expect to obtain a condition of order $1$ on these.
Finally, we introduce the function
\begin{equation*}
\zeta(x)=\frac{\psi(x)}{|x|^2},
\end{equation*}
which is bounded by our assumption on the regularity of $\sig$.
Then
\begin{multline*}
z(\theta^{-1}x',\theta^{-1}y')\\
=\frac{\alpha_-}{\delta}y''+\frac{\beta}{2\delta^2}y''^2-\frac{|x''|^2}{2\delta}-\frac{\alpha_-}{\delta}\frac{\psi(x)}{\theta}-\frac{\beta}{\delta^2}y''\frac{\psi(x)}{\theta}+\frac{\beta}{2\delta^2}\frac{\psi(x)^2}{\theta^2}.
\end{multline*}

Suppose $|x|^2+y^2=r^2$. Let $r''=\theta^{-1}r$, then
\begin{multline*}
z(\theta^{-1}x',\theta^{-1}y')=\frac{\alpha_-}{\delta}y''+\frac{1}{2\delta^2}(\delta+\beta)(y'')^2-\frac{(r'')^2}{2\delta}\\
-\frac{\alpha_-}{\delta}\theta\zeta(x)|x''|^2-\frac{\beta}{\delta^2}\theta\zeta(x)y''|x''|^2+\frac{\beta}{2\delta^2}\theta^2\zeta(x)^2|x''|^4
\end{multline*}
When $r''<r_1=\frac{\alpha_-\delta}{\delta+\beta}$, the minimum and maximum values of the fist three terms on the right hand side combined will be attained when $y''=\pm r''$ (endpoints of $[-r'',r'']$). Let $\|\zeta\|=\|\zeta\|_{L^\infty(B_{\R^{n-1}}(0,r_0))}$, then
\begin{equation*}
z(\theta^{-1}x',\theta^{-1}y')\leq \frac{\alpha_-}{\delta}(1+\theta\|\zeta\|r'')r''+\frac{\beta}{2\delta^2}(1+2\theta\|\zeta\|r''+\theta^2\|\zeta\|^2r''^2)r''^2,
\end{equation*}
\begin{equation*}
z(\theta^{-1}x',\theta^{-1}y')\geq -\frac{\alpha_-}{\delta}(1+\theta\|\zeta\|r'')r''+\frac{\beta}{2\delta^2}(1-2\theta\|\zeta\|r''-\theta^2\|\zeta\|^2r''^2)r''^2.
\end{equation*}
Suppose now that $r''<r_2$, where $r_2$ is chosen so that 
\begin{equation*}
2\|\zeta\|r_2+\|\zeta\|^2r_2^2<\frac{1}{2},
\end{equation*}
(which implies $r_2<1/(4\|\zeta\|)$). We have 
\begin{equation*}
z(\theta^{-1}x',\theta^{-1}y')\leq \frac{3\alpha_-}{2\delta}r''+\frac{3\beta}{4\delta^2}r''^2,
\end{equation*}
\begin{equation*}
z(\theta^{-1}x',\theta^{-1}y')\geq -\frac{3\alpha_-}{2\delta}r''+\frac{\beta}{4\delta^2}r''^2.
\end{equation*}
Incidentally, note that if $r''<r_2$, then
\begin{equation*}
\theta^{-1}y'=y''-\theta^{-1}\psi(x)<r''+\theta\|\zeta\|r''^2<\frac{3}{2}r'',
\end{equation*}
so the condition $\theta^{-1}y'<\frac{R_1}{8a}$ is satisfied if $r''<\frac{R_1}{12 a}$. Hence, $\Psi_P(B(0,r))\subset \theta U_2$ if
\begin{equation*}
\frac{3\alpha_-}{2\delta}r''+\frac{3\beta}{4\delta^2}r''^2<\frac{R_1}{2a}
\end{equation*}
and
\begin{equation*}
\frac{3\alpha_-}{2\delta}r''-\frac{\beta}{4\delta^2}r''^2<R_2.
\end{equation*}
Consequently, if we only consider $r''<r_3=\frac{2\alpha_-\delta}{\beta}$, then we have that $\Psi_P(B(0,r))\subset \theta U_2$ if 
\begin{equation*}
\frac{3\alpha_-}{\delta}r''<\frac{R_1}{2a}
\end{equation*}
and
\begin{equation*}
\frac{3\alpha_-}{2\delta}r''<R_2
\end{equation*}
In other words, we have proved the following lemma.
\begin{lem}\label{lem-U2}
If 
\begin{equation*}
r<\theta\min\left\{\frac{\delta R_1}{6a\alpha_-},\frac{2\delta R_2}{3\alpha_-}, \frac{R_1}{12a},\theta^{-1}r_0,r_1,r_2,r_3\right\},
\end{equation*}
then $\Psi_P(B(P,r))\subset \theta U_2$.
\end{lem}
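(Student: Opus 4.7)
The plan is essentially to organize the computation that precedes the statement of the lemma into a clean verification of the three defining conditions of $\theta U_2$, namely $-R_2\leq z\leq R_1/(2a)$ and $y<R_1/(8a)$ evaluated at the straightened point $\theta^{-1}(x',y')$. First, I would observe that by definition $(x,y)\in\Psi_P^{-1}(\theta U_2)$ is equivalent to $\theta^{-1}(x',y')\in U_2$, where $x'=x$ and $y'=y-\psi(x)$. So the whole task reduces to showing that, under the stated upper bound on $r$, every point of $B(P,r)$ lands in $U_2$ after applying $\theta^{-1}\Psi_P$.

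Next, I would invoke the expansion of $z(\theta^{-1}x',\theta^{-1}y')$ already computed above, which, using the substitution $\psi(x)=\zeta(x)|x|^2$ and the variables $x''=x/\theta$, $y''=y/\theta$, $r''=\theta^{-1}r$, takes the form of the leading quadratic $\alpha_-y''/\delta+(\delta+\beta)(y'')^2/(2\delta^2)-(r'')^2/(2\delta)$ plus error terms controlled by $\theta\|\zeta\|$. The restriction $r''<r_1=\alpha_-\delta/(\delta+\beta)$ is precisely the condition under which the maximum of the quadratic in $y''\in[-r'',r'']$ is attained at the endpoints, which justifies the symmetric upper/lower bounds displayed earlier. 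The further constraint $r''<r_2$ (with $2\|\zeta\|r_2+\|\zeta\|^2r_2^2<1/2$) turns the error terms into a factor of at most $3/2$, yielding the clean envelopes $z\leq 3\alpha_- r''/(2\delta)+3\beta(r'')^2/(4\delta^2)$ and $z\geq -3\alpha_- r''/(2\delta)+\beta(r'')^2/(4\delta^2)$.

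Then, restricting further to $r''<r_3=2\alpha_-\delta/\beta$ makes the $r''$ term dominant in the upper bound for $z$ and the lower bound respectively, so that $z\leq 3\alpha_- r''/\delta$ and $z\geq -3\alpha_- r''/(2\delta)$. Requiring the upper bound to be at most $R_1/(2a)$ and the negative of the lower bound at most $R_2$ immediately gives the two scaling conditions $r<\theta\cdot\delta R_1/(6a\alpha_-)$ and $r<\theta\cdot 2\delta R_2/(3\alpha_-)$. Finally, to handle the last defining condition of $U_2$, namely $y<R_1/(8a)$, I would use the already-derived bound $\theta^{-1}y'<3r''/2$ valid under $r''<r_2$, which yields the requirement $r<\theta\cdot R_1/(12a)$. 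The bound $r<r_0$ is needed just to stay inside the chart $C_{r_0,K_0}(0)$ where $\psi$ is defined.

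There is really no hard step here; the proof is a mechanical bookkeeping exercise once the expansion of $z$ is in hand. The only place that needs care is the choice of the auxiliary radii $r_1,r_2,r_3$: one has to verify that each of the simplifications (endpoint optimization in $y''$, absorbing the $\zeta$-corrections into a universal constant, and dropping the $(r'')^2$ contributions against the $r''$ contributions) is genuinely valid throughout the ball, and then take the minimum of all the resulting radii together with the two scaling conditions to produce the single bound on $r$ stated in the lemma. Since the preceding discussion already records all of these estimates explicitly, the proof itself can be written as a one-line invocation of those inequalities.
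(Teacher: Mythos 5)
Your proposal is correct and follows essentially the same route as the paper: it is exactly the preceding computation (the expansion of $z(\theta^{-1}x',\theta^{-1}y')$, the endpoint optimization under $r''<r_1$, the $\zeta$-absorption under $r''<r_2$, the simplification under $r''<r_3$, and the separate check of $\theta^{-1}y'<R_1/(8a)$) reorganized into a verification of the three defining inequalities of $\theta U_2$. Nothing new is added or omitted; the paper itself treats the lemma as a summary of that computation.
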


Using the same notation as above, by simple estimates we get that if $(x',y')\in \theta U_3$, then
\begin{equation*}
\frac{|x'|^2}{\theta^2}<\frac{2\alpha_-}{a}R_1+\frac{\beta}{a^2\delta}R_1^2+8\delta R_2=:R_0^2,
\end{equation*}
\begin{equation*}
-\frac{\delta}{\beta}\left(\alpha_--\sqrt{\alpha_-^2 -8\beta R_2}\right)\leq \frac{y'}{\theta} \leq\frac{R_1}{a}.
\end{equation*}
Noting that 
\begin{equation*}
|y|^2=|y'+\psi(x)|^2\leq 2|y'|^2+2\|\zeta\|^2|x|^2,
\end{equation*}
we can show that 
\begin{lem}\label{lem-U3}
$\Psi_P^{-1}(\theta U_3)$ is contained in a ball of radius
{\scriptsize
\begin{equation*}
\theta\left[(1+2\|\zeta\|^2)\left(\frac{2\alpha_-}{a}R_1+8\delta R_2\right)+
\frac{1}{a^2}\left[2+(1+2\|\zeta\|^2)\frac{\beta}{\delta}   \right]R_1^2 +
\frac{128\delta^2R_2^2}{\left[\alpha_-+\sqrt{\alpha_-^2-8\beta  R_2}  \right]^2}      \right]^{1/2}
\end{equation*}
}
centered at $P$.
\end{lem}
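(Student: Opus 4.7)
The plan is to unpack the defining inequalities of $\theta U_3$ into explicit bounds on $x'$ and $y'$ separately, then translate these back to $(x,y)$ using $y=y'+\psi(x)$ together with the boundedness of $\zeta$ built into the $C^2$ regularity of $\sig$.

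First, I would take $(x,y)\in \Psi_P^{-1}(\theta U_3)$ and set $x'=x$, $y'=y-\psi(x)$, so that $(x'/\theta, y'/\theta)\in U_3$. The defining conditions are $y'/\theta<R_1/a$ and
\begin{equation*}
\frac{\alpha_-}{\delta}\frac{y'}{\theta}+\frac{\beta}{2\delta^2}\left(\frac{y'}{\theta}\right)^2-\frac{|x'|^2}{2\delta\theta^2}\geq -4R_2.
\end{equation*}
Isolating $|x'|^2/\theta^2$ and bounding $y'/\theta$ above by $R_1/a$ in the remaining terms reproduces the inequality $|x'|^2/\theta^2\leq R_0^2$ already recorded in the excerpt.

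For the lower bound on $y'/\theta$, I would drop the non-negative quantity $|x'|^2/(2\delta\theta^2)$ and view the resulting inequality as a quadratic in $t=y'/\theta$:
\begin{equation*}
\frac{\beta}{2\delta^2}t^2+\frac{\alpha_-}{\delta}t+4R_2\geq 0.
\end{equation*}
Since $t<R_1/a$ excludes the far branch below the smaller root, $t$ must lie above the larger root, and rationalizing produces the bound $y'/\theta\geq -8\delta R_2/(\alpha_-+\sqrt{\alpha_-^2-8\beta R_2})$. Combined with the upper bound $R_1/a$, this controls $(y'/\theta)^2$ by a sum of the two squares appearing in the statement.

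Finally, the passage back to $(x,y)$ uses $|y|^2=|y'+\psi(x)|^2\leq 2|y'|^2+2|\psi(x)|^2$ together with the pointwise estimate $|\psi(x)|\leq \|\zeta\||x|^2$, after which the bounds on $|x'|^2/\theta^2$ and $|y'|^2/\theta^2$ are substituted into $|x|^2+|y|^2$ and like terms in $R_1,R_2$ are collected; taking the square root and factoring out $\theta$ yields the stated radius of a ball centered at $P$ containing $\Psi_P^{-1}(\theta U_3)$. The main subtlety is the quadratic root extraction, which requires the discriminant $\alpha_-^2-8\beta R_2$ to be non-negative; this is secured by shrinking the upper bound $R$ on $R_2$ from Theorem \ref{thm-FLVW} if necessary. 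The rest is routine bookkeeping, with the $\|\zeta\|$ contributions encoding precisely the deviation of $\sig$ from the tangent hyperplane at $P$.
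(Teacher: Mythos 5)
Your calculation matches the paper's (which is essentially a one-line ``by simple estimates'' claim), and the bounds $|x'|^2/\theta^2\leq R_0^2$ and the rationalized lower bound on $y'/\theta$ are correctly derived. Two points deserve attention, however.

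First, your justification for selecting the upper root is not valid as stated. After dropping $|x'|^2/(2\delta\theta^2)$ you obtain $\tfrac{\beta}{2\delta^2}t^2+\tfrac{\alpha_-}{\delta}t+4R_2\geq 0$, which forces $t$ to lie \emph{outside} the open interval between the roots $t_\pm=\tfrac{\delta}{\beta}\bigl(-\alpha_-\pm\sqrt{\alpha_-^2-8\beta R_2}\bigr)$. Since both roots are negative and $R_1/a>0$, the constraint $t<R_1/a$ does nothing to exclude the branch $t\leq t_-$; that branch is, in fact, unbounded in the $y$-direction, and the set $\{z\geq-4R_2,\,y<R_1/a\}$ literally read is unbounded. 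What is needed (and what the paper tacitly assumes, inheriting the convention from \cite{FLVW}) is that $U_3$ denotes the bounded connected component containing the origin, i.e.\ that there is an implicit lower bound on $y$. You should invoke that convention explicitly rather than argue that $t<R_1/a$ rules out the other branch.

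Second, $|\psi(x)|\leq\|\zeta\||x|^2$ gives $|\psi(x)|^2\leq\|\zeta\|^2|x|^4$, not $\|\zeta\|^2|x|^2$; collecting like terms to reach the stated radius therefore uses $|x|\leq 1$, which holds once $\theta R_0\leq 1$. The paper elides this as well (it writes $|y|^2\leq2|y'|^2+2\|\zeta\|^2|x|^2$ directly), but calling it ``routine bookkeeping'' hides the step that makes the formula close.
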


Finally, we would like to estimate the distance between $\Psi_P^{-1}(\theta U_1)$ and $\sig\cap C_{r_0,K_0}$. Note that  $\mbox{dist} (\theta U_1, \pr\R^n_+)=\theta \frac{R_1}{8 a}$.  Recall that $R_0$ is such that $\Psi_P^{-1}(\theta U_3)$ intersects the plane $\{y=0\}$ in a set contained in a ball of radius $\theta R_0$ centered at $P$. Let $x_1, x_2\in B_{\R^{n-1}}(0,R_0)$, $Y>0$, and set
\begin{equation*}
d=\mbox{\rm dist}\left((x_1,\psi(x_1)),(x_2,\psi(x_2)+Y)\right).
\end{equation*}
Then
\begin{multline*}
d^2=|x_2-x_1|^2+|Y+\psi(x_2)-\psi(x_1)|^2\\
\geq |x_2-x_1|^2+|Y|^2+|\psi(x_2)-\psi(x_1)|^2-2|Y|\,|\psi(x_2)-\psi(x_1)|\\
\geq |x_2-x_1|^2+\frac{1}{2}|Y|^2-|\psi(x_2)-\psi(x_1)|^2.
\end{multline*}
We can estimate 
\begin{multline*}
|\psi(x_2)-\psi(x_1)|\leq |x_2-x_1|\int_0^1|\nabla \psi(x_1+t(x_2-x_1))|\dd t\\
\leq |x_2-x_1|K_0\int_0^1|x_1+t(x_2-x_1)|\dd t\leq K_0 R_0|x_2-x_1|,
\end{multline*}
therefore
\begin{equation*}
d \geq \frac{1}{2}|Y|^2+(1-K_0^2R_0^2)|x_2-x_1|^2.
\end{equation*}
If necessary, $R$ (given in Theorem~\ref{thm-FLVW}) can be changed so that $K_0^2R_0^2<1$. The above estimate, with $Y=\theta\frac{R_1}{8 a}$,  implies
\begin{lem}\label{lem-U1}
\begin{equation*}
\mbox{\rm dist}(\Psi_P^{-1}(\theta U_1),\Sigma)>\theta\frac{R_1}{16a}.
\end{equation*}
\end{lem}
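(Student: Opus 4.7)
\medskip

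\noindent\textbf{Proof plan for Lemma \ref{lem-U1}.} The plan is to use the explicit form $\Psi_P(x,y)=(x,y-\psi(x))$ to reduce the distance between $\Psi_P^{-1}(\theta U_1)$ and $\sig$ to a Euclidean distance between a point lying strictly above the graph of $\psi$ and a point on that graph, and then to absorb the resulting cross term via an elementary inequality combined with the fact that $\psi(0)=0$ and $\nabla\psi(0)=0$.

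First I would parametrize: any point in $\Psi_P^{-1}(\theta U_1)$ has the form $(x_2,\psi(x_2)+Y)$, where, by the definition of $U_1$, $Y\geq \theta R_1/(8a)$, and, since $\theta U_1\subset\theta U_3$, Lemma \ref{lem-U3} (more precisely the bound $|x'|/\theta<R_0$ extracted in its proof) gives $|x_2|\leq \theta R_0$. A point of $\sig\cap C_{r_0,K_0}(0)$ has the form $(x_1,\psi(x_1))$. Squaring the Euclidean distance and expanding,
\begin{equation*}
d^2=|x_2-x_1|^2+|Y+\psi(x_2)-\psi(x_1)|^2,
\end{equation*}
and the key step is to apply Young's inequality to the cross term,
\begin{equation*}
2|Y|\,|\psi(x_2)-\psi(x_1)|\leq \tfrac{1}{2}|Y|^2+2|\psi(x_2)-\psi(x_1)|^2,
\end{equation*}
which yields $d^2\geq |x_2-x_1|^2+\tfrac{1}{2}|Y|^2-|\psi(x_2)-\psi(x_1)|^2$.

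Next I would use $\nabla\psi(0)=0$ together with $\|\psi\|_{C^2(B_{r_0}(0))}\leq K_0$ to deduce $|\nabla\psi(x)|\leq K_0|x|$ for $|x|\leq r_0$; integrating along the segment from $x_1$ to $x_2$ and using $|x_1|,|x_2|\leq R_0$ gives $|\psi(x_2)-\psi(x_1)|\leq K_0 R_0\,|x_2-x_1|$, whence $d^2\geq (1-K_0^2 R_0^2)|x_2-x_1|^2+\tfrac{1}{2}|Y|^2$. One then chooses the constant $R$ in Theorem \ref{thm-FLVW} small enough (which is harmless, since shrinking $R$ only makes the hypotheses more restrictive) that the expression inside the square root in Lemma \ref{lem-U3} produces $R_0$ with $K_0^2 R_0^2\leq 1$. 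Dropping the first (nonnegative) term and inserting the lower bound on $Y$ gives $d\geq |Y|/\sqrt{2}\geq \theta R_1/(8\sqrt{2}\,a)>\theta R_1/(16 a)$.

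The only real subtlety is the choice of $R$: one must track how $R_0$ depends on $R_1,R_2\leq R$ (together with $\delta,a,\beta,\alpha_-$) in Lemma \ref{lem-U3} to ensure that $R$ can be shrunk so that $K_0 R_0 <1$ uniformly. Points of $\sig$ outside the local chart $C_{r_0,K_0}(0)$ need not be considered, because for $R$ small enough $\Psi_P^{-1}(\theta U_3)$ is contained in, say, the half-radius cylinder, so any point of $\sig\setminus C_{r_0,K_0}(0)$ is automatically farther than $\theta R_1/(16a)$ from $\Psi_P^{-1}(\theta U_1)$.
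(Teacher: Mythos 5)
Your proof is correct and follows essentially the same route as the paper: parametrize a point of $\Psi_P^{-1}(\theta U_1)$ as $(x_2,\psi(x_2)+Y)$ with $Y>\theta R_1/(8a)$, expand the squared distance to $(x_1,\psi(x_1))$, absorb the cross term by Young's inequality, bound $|\psi(x_2)-\psi(x_1)|\leq K_0R_0|x_2-x_1|$ using $\nabla\psi(0)=0$, and shrink $R$ so that $K_0^2R_0^2\leq 1$. Your remark that points of $\sig$ outside the chart $C_{r_0,K_0}(0)$ must be treated separately (being automatically far away once $R$ is small) is a small but genuine tightening of the paper's argument, which restricts to $\sig\cap C_{r_0,K_0}$ without comment.
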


\subsection{Proof of Theorem \ref{propagation-thm}}

Without loss of generality,  we may take $D$ to be the set 
\begin{equation*}
D=\{x\in\dom:\mbox{\rm dist}(x,\pr\dom)>h\}.
\end{equation*}
We pick $R_1$, $R_2$ so that we can apply Corollary \ref{cor-FLVW-2} at any point $P\in \sig\cap D$. By Lemma \ref{lem-U3}, there is a constant $d>0$, independent of $P$, such that $\Psi_P^{-1}(\theta U_3)\subset B_{\theta d}(P)$. We will choose $\theta$ such that $\theta d=\frac{h}{2}$, which implies $\Psi_P^{-1}(\theta U_3)\subset \dom$ for any  $P\in \sig\cap D$. Of course, this choice is not possible if $h$ is too large, so here we need to set $h_0$ low enough, depending on $r_0, K_0, \lambda, M, K_1, \sig$.

With this choice of parameters, by Lemma \ref{lem-U1}, there is a constant $1>\mu>0$, also independent on $P$, so that 
\begin{equation*}
\mbox{\rm dist}(\Psi_P^{-1}(\theta U_1), \sig)>\mu h.
\end{equation*}
 Note that, depending on the geometry of $\sig$, we again need to set $h_0$ and $R$ small enough so that $\Psi_P^{-1}(\theta U_1)\cap \sig^{\mu h}=\emptyset$, for any  $P\in \sig\cap D$.
 
By Lemma \ref{lem-U2}, there exists a constant $\nu>0$, and without loss of generality $\nu<\mu<1$, so that $B_{5\nu h}(P)\subset \Psi_P^{-1}(\theta U_2)$. By Vitali's covering lemma, there exist finitely many $P_1,\ldots,P_N\in \sig\cap D$ so that
\begin{equation}\label{qq}
\sig^{\nu h}\cap D\subset\bigcup_{j=1}^N\Psi_{P_j}^{-1}(\theta U_2),
\end{equation}
and the balls $B_{\nu h}(P_j)$ are pairwise disjoint. By this last property, since for small $h$ we have $|\sig^{\nu h}\cap D|\sim \nu h |\sig\cap D|$, it follows that there is a constant $C$ such that
\begin{equation}\label{qqq}
N\leq C\frac{|\sig\cap D|}{h^{n-1}}\leq C\frac{|\sig\cap\dom|}{h^{n-1}}.
\end{equation}

Let us denote $\tilde D= (D_+)^{h/2}\setminus(\sig^{\nu h}\cup \dom_-)$, then by Theorem \ref{propagation}, we have that
\begin{equation}\label{D-plus-estimate}
\|u\|_{L^2(\tilde D)}\leq C_+ (\|u\|_{L^2(B_r(x_0))}+\epsilon)^{\delta_+}(\|u\|_{L^2(\dom)}+\epsilon)^{1-\delta_+},
\end{equation}
where
\begin{equation*}
B_r(x_0)\subset\tilde D,\quad C_+=C_1\left(\frac{|\dom|}{h^n}\right)^{\frac{1}{2}},\quad \delta_+\geq\tau^{\frac{C_2|\dom|}{h^n}}.
\end{equation*}
The function $v=u\circ \Psi_{P_j}^{-1}\in H^1(\theta U_3)$ satisfies in $\theta U_3$ an equation of the form
\begin{equation*}
\el_{\tilde \gamma} v=\tilde f+\nabla \cdot \tilde F,\quad \|\tilde f\|_{L^2(\dom)}+\|\tilde F\|_{L^2(\dom)}\leq C\epsilon,
\end{equation*}
with the coefficients of the operator $\el_{\tilde \gamma}$ satisfying
\begin{equation*}
\tilde \gamma^\pm=\left((\tilde a^\pm_{jk})_{jk},(\tilde b_j)_j,  \tilde q  \right)\in \mathscr{V}(\R^n_\pm,\tilde \lambda,\tilde M,\tilde K_1, \tilde K_2),
\end{equation*}
with $C>0$ and the parameters $\tilde \lambda,\tilde M,\tilde K_1, \tilde K_2$ depending on $\lambda, M, K_1, K_2, r_0, K_0$. We can then pull back the three regions inequality of Corrolary \ref{cor-FLVW-2} and apply it to $u$ and the regions $\Psi_{P_j}^{-1}(\theta U_1)$, $\Psi_{P_j}^{-1}(\theta U_2)$, $\Psi_{P_j}^{-1}(\theta U_3)$.

Since 
\begin{equation*}
\Psi_{P_j}^{-1}(\theta U_1)\subset (D_+)^{h/2}\setminus(\sig^{\nu h}\cup \dom_-),
\end{equation*}
we have that
\begin{equation*}
\|u\|_{L^2(\Psi_{P_j}^{-1}(\theta U_2))}\leq C (\|u\|_{L^2((D_+)^{h/2}\setminus(\sig^{\nu h}\cup \dom_-))}+\epsilon)^\xi 
(\|u\|_{L^2(\dom)}+\epsilon)^{1-\xi},
\end{equation*}
where $\xi=\frac{R_2}{2 R_1+3 R_2}$. Combining this and \eqref{D-plus-estimate}, we obtain
\begin{equation*}
\|u\|_{L^2(\Psi_{P_j}^{-1}(\theta U_2))}\leq C_1'\left(\frac{|\dom|}{h^n}\right)^{\frac{\xi}{2}}
 (\|u\|_{L^2(B_r(x_0))}+\epsilon)^{\xi\delta_+}(\|u\|_{L^2(\dom)}+\epsilon)^{1-\xi\delta_+}.
\end{equation*}
Then it follows from \eqref{qq} and \eqref{qqq} that 
\begin{multline}\label{Sigma-estimate}
\|u\|_{L^2(\sig^{\nu h}\cap D)}\leq C_1''\left(\frac{|\sig\cap \Omega|}{h^{n-1}}\right)^{\frac{1}{2}}\left(\frac{|\dom|}{h^n}\right)^{\frac{\xi}{2}}\\[5pt]
\times (\|u\|_{L^2(B_r(x_0))}+\epsilon)^{\xi\delta_+}(\|u\|_{L^2(\dom)}+\epsilon)^{1-\xi\delta_+}.
\end{multline}

Applying Theorem \ref{propagation} again (now with an appropriate small ball $\tilde B_{\tilde r}\subset\Sigma^{\nu h}\cap D_-\subset\Sigma^{\nu h}\cap D$), we have
\begin{multline}\label{D-minus-estimate}
\|u\|_{L^2(D_-\setminus\sig^{\nu h})}\leq C_1'''\left(\frac{|\dom|}{h^n}  \right)^{\frac{1}{2}}\left(\frac{|\sig\cap \dom|}{h^{n-1}}\right)^{\frac{\delta_-}{2}}\left(\frac{|\dom|}{h^n}\right)^{\frac{\delta_-\xi}{2}}\\[5pt]
\times (\|u\|_{L^2(B_r(x_0))}+\epsilon)^{\delta_-\xi\delta_+}(\|u\|_{L^2(\dom)}+\epsilon)^{1-\delta_-\xi\delta_+},
\end{multline}
where
\begin{equation*}
\delta_-\geq\tau^{\frac{C_2'|\dom|}{h^n}}.
\end{equation*}
Combining the estimates \eqref{D-plus-estimate}, \eqref{Sigma-estimate}, and \eqref{D-minus-estimate}, we obtain the conclusion of Theorem \ref{propagation-thm}.

\section{The proof of Theorem \ref{main-thm-1}}\label{PROOF}

In this section we  will prove the main theorem of this paper. We begin with deriving a three balls inequality, which is a direct consequence of Theorem \ref{propagation-thm}. We would like to remark that a version of three balls inequality for the second order elliptic equation with jump-type discontinuous coefficients was obtained in \cite{DCR19}. However, the estimate  in \cite{DCR19} does not fit what we need. So we derive our own three balls inequality here to serve a building block in the proof of the main theorem. 

\subsection{Three balls inequality}

Here we  assume $\dom\subset\R^n$ is an open Lipschtiz domain,  $\sig$ is a $C^2$ hypersurface with constants $r_0$, $K_0$,  and $\dom\setminus \sig$ has two connected components, $\dom_\pm$. We also assume we have coefficients
\begin{equation*}
\left((a^\pm_{jk})_{jk},  q  \right)\in \mathscr{V}_0(\dom_\pm,\lambda,M,K_1).
\end{equation*}
With these assumptions, let $u\in H^1(\dom)$ be a solution of
\begin{equation*}
\el_{\gamma} u=f+\nabla\cdot F,\quad \|f\|_{L^2(\dom)}+\|F\|_{L^2(\dom)}\leq \epsilon.
\end{equation*}

\begin{thm}\label{three-balls}
There exist values $\bar r>0$, depending on $r_0$, $K_0$, such that if    $0<r_1<r_2<r_3<\bar r$, $Q\in\dom$, $\text{\rm dist}(Q,\pr\dom)>r_3$, then there exist $C>0$, $0<\delta<1$ such that
\begin{equation}\label{3ball}
\|u\|_{L^2(B_{r_2}(Q))}\leq C(\|u\|_{L^2(B_{r_1}(Q))}+\epsilon)^\delta (\|u\|_{L^2(B_{r_3}(Q))}+\epsilon)^{1-\delta}.
\end{equation}
 $C$, and $\delta$ depend on  $\lambda$, $M$, $r_0$, $K_0$, $K_1$, $\frac{r_1}{r_2}$, $\frac{r_2}{r_3}$, $\text{\rm diam}(\dom)$, $|\sig\cap\dom|$.
\end{thm}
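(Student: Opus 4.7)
The plan is to derive Theorem \ref{three-balls} as a direct application of Theorem \ref{propagation-thm}, with $\dom$ there replaced by the ambient ball $B_{r_3}(Q)$ and $D$ replaced by $B_{r_2}(Q)$. Since $\text{dist}(Q,\pr\dom) > r_3$, we have $B_{r_3}(Q) \subset\subset \dom$, so the restricted coefficients still lie in $\mathscr{V}_0\bigl((B_{r_3}(Q))_\pm,\lambda,M,K_1\bigr)$, and $u$ solves the same inhomogeneous equation on $B_{r_3}(Q)$ with right-hand side bounded in $L^2$ by $\epsilon$. Choosing $h$ of order $\min(r_3 - r_2,\, r_1)$ simultaneously yields the boundary separation $\text{dist}(B_{r_2}(Q),\pr B_{r_3}(Q)) \geq h$ and the size condition $h < r/2$ required in Theorem \ref{propagation-thm}.

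The only geometric content that still needs to be supplied is a ball playing the role of $B_r(x_0)$: a ball contained in $B_{r_1}(Q)$ that lies entirely on one side of $\sig$ and has radius comparable to $r_1$. I split into two cases. If $B_{r_3}(Q) \cap \sig = \emptyset$, the coefficients are Lipschitz throughout $B_{r_3}(Q)$ and Theorem \ref{propagation} applies verbatim with $x_0 = Q$, $r = r_1$. Otherwise, I invoke the $C^2$ parametrization from Definition \ref{def-sigma}: by choosing $\bar r$ small enough compared with $r_0$ and $1/K_0$, the hypersurface $\sig$ is essentially flat at scale $r_1$, so that each connected component of $B_{r_1}(Q)\setminus\sig$ contains a ball $B_\rho(x_0)$ of radius $\rho = c r_1$ for some constant $c>0$ depending only on $K_0$. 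I pick such a ball on the $+$ side (say) and use it as the small ball in Theorem \ref{propagation-thm}.

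The inequality \eqref{3ball} then follows from Theorem \ref{propagation-thm} together with the monotonicity $\|u\|_{L^2(B_\rho(x_0))} \leq \|u\|_{L^2(B_{r_1}(Q))}$. The constants $C$ and $\delta$ produced by Theorem \ref{propagation-thm} depend on $|B_{r_3}(Q)|/h^n$ and $|\sig \cap B_{r_3}(Q)|/h^{n-1}$; with the above choice of $h$, these quantities are bounded in terms of $r_1/r_2$, $r_2/r_3$, $\text{diam}(\dom)$ (through $r_3 \leq \text{diam}(\dom)$), and $|\sig \cap \dom|$, matching the claimed dependencies.

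The main obstacle, and the only step not already packaged inside Theorem \ref{propagation-thm}, is the construction of a ball on one side of $\sig$ contained in $B_{r_1}(Q)$ when $Q$ is close to the interface; everything else is bookkeeping of constants arising from the rescaling of the ambient ball $B_{r_3}(Q)$.
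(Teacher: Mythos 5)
Your approach is essentially the same as the paper's: both apply Theorem \ref{propagation-thm} with $\dom$ replaced by $B_{r_3}(Q)$, $D = B_{r_2}(Q)$, and a small ball of radius comparable to $r_1$ chosen inside $B_{r_1}(Q)$ on one side of $\sig$ (the paper takes $B_{r_1/10}(Q')$ with $Q'$ suitably displaced from $Q$; your $\rho = c r_1$ is the same idea). The paper also records the observation you make implicitly that $|\sig\cap B_{r_3}(Q)| \lesssim r_3^{n-1}$ via the $C^2$ bound on $\psi$.

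The one point you gloss over is the constraint $h < h_0$ in Theorem \ref{propagation-thm}. You propose $h \sim \min(r_3 - r_2, r_1)$, which handles $h < r/2$ and $\mathrm{dist}(B_{r_2}(Q),\pr B_{r_3}(Q)) \geq h$, but if $\min(r_3-r_2, r_1)$ exceeds $h_0$ this $h$ is inadmissible. The paper resolves this by splitting into three regimes: $h = r_3 - r_2$ when that is small, $h \sim r_1$ (with a shrunken ambient ball $B_{r_2 + r_1/21}(Q)$) when $r_1$ is small and $r_3-r_2$ is not, and $h = h_0$ when both are bounded below, in which case one bounds $|B_{r_3}(Q)|/h_0^n$ by $(\mathrm{diam}\,\dom)^n/h_0^n$ and $|\sig\cap B_{r_3}(Q)|/h_0^{n-1}$ by $|\sig\cap\dom|/h_0^{n-1}$. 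This third regime is precisely why the theorem's statement lists $\mathrm{diam}(\dom)$ and $|\sig\cap\dom|$ among the dependencies; your sketch mentions them but does not supply the case that generates them. The fix is small and does not change your overall strategy, but it should be made explicit.
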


\begin{proof}

We would like to use the propagation of smallness result with $r=\frac{r_1}{10}$, $D=B_{r_2}(Q)$, and $\dom=B_{r_3}(Q)$. We can choose the constant $\bar r$ so  that $B_{r_j}(Q)\setminus\sig$ can all only have at most two connected components. This would be the case for example if $\bar r\leq \min(r_0,\frac{1}{2}K_0r_0^2)$. Fix an $\bar r$ as described. Then we can always find $Q'\in B_{r_1}(Q)$ so that $B_{r_1/10}(Q')\subset B_{r_1}(Q)\cap\dom_+$ or $B_{r_1/10}(Q')\subset B_{r_1}(Q)\cap\dom_-$. Without loss of generality we may assume that $B_{r_1/10}(Q')\subset B_{r_1}(Q)\cap\dom_+$.

Let $g^\sig$ be the metric induced on $\sig$ by the Euclidean metric of $\R^n$. Around a point $P\in\sig$ at which we have chosen coordinates as in Definition \ref{def-sigma}, we can use the coordinates $(x_1, \ldots, x_{n-1})$ as a local map for $\sig$. In these coordinates
\begin{equation*}
g^\sig_{jk}=\delta_{jk}+\pr_j\psi\pr_k\psi.
\end{equation*}
This observation implies that
there exists a constant $\kappa$ so that
\begin{equation*}
|\sig\cap B_{r_3}(Q)|<\kappa r_3^{n-1}.
\end{equation*}

We will treat several cases separately. The first case is when $r_3-r_2<\min(\frac{r_1}{20},h_0)$. 
Then we can apply Theorem \ref{propagation-thm}, with $h=r_3-r_2$,  to obtain
\begin{equation*}
\|u\|_{L^2(B_{r_2}(Q))}\leq C(\|u\|_{L^2(B_{\frac{r_1}{10}}(Q'))}+\epsilon)^\delta (\|u\|_{L^2(B_{r_3}(Q))}+\epsilon)^{1-\delta},
\end{equation*}
where
\begin{equation}\label{C-i}
C=C_1\left(\frac{r_3^n}{(r_3-r_2)^{n}}\right)\left[1+\left(\frac{\kappa r_3^{n-1}}{(r_3-r_2)^{n-1}} \right)^\frac{1}{2}\right],
\end{equation}
\begin{equation}\label{delta-i}
 \delta\geq\tau^{C_2\frac{ r_3^n}{(r_3-r_2)^{n}}}.
\end{equation}

The second case is when $\frac{r_1}{10}<2h_0$, $r_3-r_2\geq\frac{r_1}{20}$. Let $r_3'=r_2+\frac{r_1}{21}$ (note $r_3'<r_3$), $h=\frac{r_1}{21}$ and again apply Theorem \ref{propagation-thm} to obtain
\begin{equation*}
\|u\|_{L^2(B_{r_2}(Q))}\leq C(\|u\|_{L^2(B_{\frac{r_1}{10}}(Q'))}+\epsilon)^\delta (\|u\|_{L^2(B_{r_3}(Q))}+\epsilon)^{1-\delta},
\end{equation*}
where
\begin{equation}\label{C-ii}
C=C_1\left(\frac{(r_2+r_1/21)^n}{(r_1/21)^{n}}\right)\left[1+\left(\frac{\kappa (r_2+r_1/21)^{n-1}}{(r_1/21)^{n-1}} \right)^\frac{1}{2}\right],
\end{equation}
\begin{equation}\label{delta-ii}
 \delta\geq\tau^{C_2\frac{ (r_2+r_1/21)^n}{(r_1/21)^{n}}}.
\end{equation}

The third and final case is when $\frac{r_1}{10}\geq 2h_0$, $r_3-r_2\geq h_0$. In this case we take $h=h_0$, and use the estimates
\begin{equation*}
|B_{r_3}(Q)|\leq (\mbox{\rm diam}(\dom))^n,\quad |B_{r_3}(Q)\cap\sig|\leq |\dom\cap\sig|.
\end{equation*}
We then have
\begin{equation*}
||u||_{L^2(B_{r_2}(Q))}\leq C(\|u\|_{L^2(B_{\frac{r_1}{10}}(Q'))}+\epsilon)^\delta (\|u\|_{L^2(B_{r_3}(Q))}+\epsilon)^{1-\delta},
\end{equation*}
where
\begin{equation}\label{C-iii}
C=C_1\frac{(\mbox{\rm diam}(\dom))^n}{h_0^{n}}\left[1+\left(\frac{ |\dom\cap\sig|}{h_0^{n-1}} \right)^\frac{1}{2}\right],
\end{equation}
\begin{equation}\label{delta-iii}
 \delta\geq\tau^{C_2\frac{ (\text{diam}(\dom))^n}{h_0^{n}}}.
\end{equation}
It follows that, in all cases, we have our three ball inequality with the constant $C$ being the maximum of the ones in \eqref{C-i}, \eqref{C-ii}, and \eqref{C-iii}, and the exponent $\delta$ being the minimum of the ones in \eqref{delta-i}, \eqref{delta-ii}, and \eqref{delta-iii}.
\end{proof}

\subsection{Proof of Theorem \ref{main-thm-1}}

Once we have established the three balls inequality in Theorem \ref{three-balls}, the proof of Theorem \ref{main-thm-1} is standard. We include it here for the benefit of the reader. Let
\begin{equation*}
r_3=\frac{h}{2},\quad r_2=\frac{1}{5}r_3=\frac{1}{10}h,\quad r_1=\frac{1}{3}r_3=\frac{1}{30}h,
\end{equation*}
and
\begin{equation*}
\tilde D=\left\{x\in\dom:  \mbox{\rm dist}(x,D)<r_1\right\},
\end{equation*}
which is an open connected subset of $\dom$, such that $D\subset \tilde D$, $\text{\rm dist}(\tilde D,\pr\dom)>h/2$. Let $y\in\tilde D$ and $\gamma\in C([0,1];\tilde D)$ be a continuous curve such that $\gamma(0)=x_0$, and $\gamma(1)=y$. Define 
\begin{equation*}
0=t_0<t_1<\cdots<t_N=1
\end{equation*}
so that
\begin{equation*}
\begin{array}{c}
t_{k+1}=\max\{t:|\gamma(t)-\gamma(t_k)|=2r_1\}, \text{ as long as } |y-\gamma(t_k)|>2r_1,\\[5pt] \text{ otherwise }N=k+1, t_N=1.
\end{array}
\end{equation*}
Then $B_{r_1}(\gamma(t_k))\cap B_{r_1}(\gamma(t_{k-1}))=\emptyset$, and $B_{r_1}(\gamma(t_{k+1}))\subset B_{r_2}(\gamma(t_{k}))$, $k=1,\ldots,N-1$. By Theorem \ref{three-balls} we have
\begin{equation*}
\|u\|_{L^2(B_{r_1}(\gamma(t_{k+1})))}+\epsilon\leq C\left(\|u\|_{L^2(B_{r_1}(\gamma(t_{k})))}+\epsilon\right)^\tau(\|u\|_{L^2(\dom)}+\epsilon)^{1-\tau},
\end{equation*}
where $k=0,\ldots,N-1$. Note that by simply modifying the constant $C$ we can add $\ep$ on both sides of \eqref{3ball}. 

Let
\begin{equation*}
m_k=\frac{\|u\|_{L^2(B_{r_0}(\gamma(t_k))}+\epsilon}{\|u\|_{L^2(\dom)}+\epsilon},
\end{equation*}
then $m_{k+1}\leq C m_k^\tau$, $k=0,\ldots,N$, and so
\begin{equation*}
m_N\leq C^{1+\tau+\cdots+\tau^{N-1}}m_0^{\tau^N}.
\end{equation*}
Since the balls $B_{r_0}(\gamma(t_k))$ are pairwise disjoint,
\begin{equation*}
N\leq \frac{|\dom|}{\omega_n r_1^n}\leq \frac{C_2|\dom|}{h^n}.
\end{equation*}
Then it is easy to see that 
\begin{equation*}
\tau^N\geq\tau^{\frac{C_2|\dom|}{h^n}},\quad C^{1+\tau+\cdots+\tau^{N-1}}\leq C^{\frac{1}{1-\tau}}.
\end{equation*}
From a family of disjoint open cubes of side $2r_1/\sqrt{n}$ whose closures cover $\R^n$, extract the finite number of cubes which intersect $D$ non-trivially: $Q_j$, $j=1,\ldots,J$. The number of these cubes satisfies $J\leq\frac{n^{n/2}|\dom|}{2^nr_1^n}$. For each $j$ there exists $w_j\in \tilde D$ such that $Q_j\subset B_{r_1}(w_j)$. Then
\begin{multline*}
\int_D |u|^2\leq \sum_{j=1}^J\int_{Q_j}|u|^2\leq
\sum_{j=1}^J\int_{B_{r_1}(w_j)}|u|^2\\\leq
JC^{2/(1-\tau)}(\|u\|_{L^2(B_{r_1}(x_0)}+\epsilon)^{2\delta}(\|u\|_{L^2(\dom)}+\epsilon)^{2(1-\delta)}.
\end{multline*}

\section{Consequences of Theorem \ref{main-thm-1}}\label{CONSEQUENCES}

In this section we list three results which are consequences of Theorem \ref{main-thm-1}. All of them are analogous to results of  \cite{ARRV}, \cite{MV}, or \cite{RS},  and  exploit the similarity of Theorem \ref{main-thm-1} to \cite[Theorem 5.1]{ARRV} (quoted above as Theorem \ref{propagation}). Since the proofs of most of these results would be identical to the ones given in  \cite{ARRV}, \cite{RS}, we will not give them here. The result analogous to that of \cite{MV} is a direct consequence of our Theorem \ref{main-thm-1} and the main result of \cite{MV}. 

Again we   assume $\dom\subset\R^n$ is an open Lipschtiz domain,  $\sig$ is a $C^2$ hypersurface with constants $r_0$, $K_0$,   $\dom\setminus \sig$ has two connected components, $\dom_\pm$, and
\begin{equation*}
\left((a^\pm_{jk})_{jk},  q  \right)\in \mathscr{V}_0(\dom_\pm,\lambda,M,K_1).
\end{equation*}

\subsection{Global propagation of smallness}

One  consequence of Theorem \ref{main-thm-1} is the following global propagation of smallness theorem.

\begin{thm}[{see \cite[Theorem 5.3]{ARRV}}]\label{main-thm-2}
Let $B_{r}(x)\subset\dom$.  If $u\in H^1(\dom)$ is a solution of 
\begin{equation*}
\el_\gamma u=f+\nabla \cdot F,\quad \|f\|_{L^2(\dom)}+\|F\|_{L^2(\dom)}\leq\epsilon,
\end{equation*}
and
\begin{equation*}
\|u\|_{L^2(B_{r}(x_0))}\leq\eta,
\end{equation*}
\begin{equation*}
\|u\|_{H^1(\dom)}\leq E,
\end{equation*}
for some $\eta>0$, $E>0$, then
\begin{equation*}
\|u\|_{L^2(\dom)}\leq (E+\epsilon)\omega\left(\frac{\eta+\epsilon}{E+\epsilon}\right),
\end{equation*}
where
\begin{equation*}
\omega(t)\leq\frac{C}{\left|\log t\right|^\mu},\quad t<1,
\end{equation*}
and $C>0$, $0<\mu<1$ depend on $\lambda$, $M$, $K_1$,  $r_0$, $K_0$, $\sig$, $\dom$, $r$.
\end{thm}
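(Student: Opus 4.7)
The plan is to adapt the proof of \cite[Theorem 5.3]{ARRV} line by line. Since Theorem \ref{main-thm-1} has the same functional form as \cite[Theorem 5.1]{ARRV}, only two additional ingredients are needed: a boundary-strip inequality in Lipschitz domains, and an optimization of the width parameter $h$ against the smallness parameter $\eta/E$.

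For $0<h<h_0$, I would set $D_h=\{x\in\dom:\mbox{\rm dist}(x,\pr\dom)>h\}$. By shrinking $h_0$ if necessary we may assume $D_h$ is a connected open subset of $\dom$ containing $B_r(x_0)$ for every $h\in(0,h_0)$. Theorem \ref{main-thm-1} then gives
\begin{equation*}
\|u\|_{L^2(D_h)}\leq C_1\left(\frac{|\dom|}{h^n}\right)^{1/2}(\eta+\ep)^{\delta}(E+\ep)^{1-\delta},\qquad \delta\geq\tau^{C_2|\dom|/h^n}.
\end{equation*}
For the strip $\dom\setminus D_h$, I would invoke the classical Lipschitz-domain inequality
\begin{equation*}
\|v\|_{L^2(\dom\setminus D_h)}\leq C_3\, h^{1/2}\|v\|_{H^1(\dom)},
\end{equation*}
proved by writing $v$ as an integral of $\pr_\nu v$ along inward normals from $\pr\dom$ and integrating over the strip. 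Combined with the hypothesis $\|u\|_{H^1(\dom)}\leq E$, this yields
\begin{equation*}
\|u\|_{L^2(\dom\setminus D_h)}\leq C_3\, h^{1/2}(E+\ep).
\end{equation*}
Setting $s=(\eta+\ep)/(E+\ep)\in(0,1]$ and summing the two contributions gives
\begin{equation*}
\|u\|_{L^2(\dom)} \leq C h^{-n/2} s^{\delta}(E+\ep) + C h^{1/2}(E+\ep).
\end{equation*}

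The hard part is the final optimization in $h$. Because $\delta\geq\tau^{C_2|\dom|/h^n}$ depends exponentially on $h^{-n}$, $h$ cannot be taken too small without destroying the smallness of $s^{\delta}$, yet $h$ must go to $0$ with $s$ for the strip contribution $Ch^{1/2}$ to vanish. Carrying out the standard balancing of \cite[\S5]{ARRV}, i.e.\ choosing $h=h(s)$ so that the interior and strip terms are comparable, reduces the right-hand side to $(E+\ep)\omega(s)$ with $\omega(t)\leq C/|\log t|^{\mu}$, the exponent $\mu\in(0,1)$ inheriting dependencies from $\tau$, $n$, $\lambda$, $M$, $K_1$, $r_0$, $K_0$, $\sig$, $\dom$, and $r$, as stated.
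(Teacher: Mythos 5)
The paper does not actually give a proof of this theorem; it states that the proofs of the results in Section \ref{CONSEQUENCES} ``would be identical to the ones given in \cite{ARRV}'' and omits them, so the comparison here is against the expected ARRV-style argument rather than a written proof.

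Your decomposition $\dom=D_h\cup(\dom\setminus D_h)$, the application of Theorem \ref{main-thm-1} to $D_h$, and the boundary-strip inequality $\|u\|_{L^2(\dom\setminus D_h)}\leq Ch^{1/2}\|u\|_{H^1(\dom)}$ for Lipschitz domains are the right ingredients and do match the ARRV strategy. The difficulty is that the optimization in $h$, which you dispatch in one sentence as ``standard balancing,'' is where the entire content of the theorem lives, and a direct balancing of the two terms you wrote down does not give the claimed modulus. With $\delta\geq\tau^{C_2|\dom|/h^n}$ the interior term is $Ch^{-n/2}\exp\bigl(-\tau^{C_2|\dom|/h^n}\,|\log s|\bigr)(E+\ep)$. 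For the exponential to be small one needs $\tau^{C_2|\dom|/h^n}|\log s|\gtrsim\log|\log s|$, which after taking logarithms forces $h\gtrsim\bigl(\log|\log s|\bigr)^{-1/n}$; the strip term is then only $\lesssim\bigl(\log|\log s|\bigr)^{-1/(2n)}(E+\ep)$. That is a \emph{double}-logarithmic modulus $\omega(t)\lesssim\bigl(\log|\log t|\bigr)^{-1/(2n)}$, not the single-logarithmic $\omega(t)\leq C|\log t|^{-\mu}$ asserted in the statement; one cannot reach $|\log t|^{-\mu}$ from the two estimates you wrote, because any $h$ small enough to make $h^{1/2}\lesssim|\log s|^{-\mu}$ makes $\tau^{C_2|\dom|/h^n}$ decay so fast that $s^\delta$ tends to $1$. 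So either the ARRV balancing you are invoking is materially different from the one you sketch, or the advertised $|\log t|^{-\mu}$ rate is not what actually comes out of this Lipschitz-domain, $H^1$-a-priori-bound argument. You need to carry the optimization out explicitly rather than refer to it, and reconcile the resulting modulus with the one stated.
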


\subsection{Stability for the Cauchy problem}

Another consequence is the following stability result for the Cauchy problem for the operator $\el_\gamma$. Here  $\Gamma\subset\pr\dom$ is an open subset of the boundary.

\begin{thm}[see {\cite[Theorem 1.7]{ARRV}}]\label{T6}
Let $u\in H^1(\dom)$ be a solution of 
\begin{equation*}
\el_\gamma u=f+\nabla \cdot F,\quad \|f\|_{L^2(\dom)}+\|F\|_{L^2(\dom)}\leq\epsilon,
\end{equation*}
with $u|_{\pr\dom}\in H^{1/2}(\Gamma)$, $\sum_{jk}a_{jk}n_j\pr_k u|_{\pr\dom}\in \hbi(\Gamma)$,
\begin{equation*}\textstyle
\|u|_{\pr\dom}\|_{H^{1/2}(\sig)}+\|\sum_{jk}a_{jk}n_j\pr_k u|_{\pr\dom}\|_{\hbi(\sig)}    \leq\eta,
\end{equation*}
\begin{equation*}
\|u\|_{L^2(\dom)}\leq E_0,
\end{equation*}
for some  $\eta,\epsilon, E_0 >0$.
There exists $0<\bar h<\infty$, depending on $\lambda, L, K, \dom, \sig$ such that if
for every $0<h<\bar h$ and every open $D\subset\dom$ such that
$
\text{\rm dist} (D,\pr\dom)\geq h, 
$
we have
\begin{equation*}
\|u\|_{L^2(D)}\leq C(\epsilon+\eta)^\delta (E_0+\epsilon+\eta)^{1-\delta},
\end{equation*}
where
\begin{equation*}
C=C_1\left( \frac{|\dom|}{h^n} \right)^\frac{1}{2},\quad \delta\geq \tau^{\frac{C_2|\dom|}{h^n}},
\end{equation*}
with $C_1, C_2>0$, $\tau\in(0,1)$, depending on $\lambda$, $M$, $K_1$,  $r_0$, $K_0$, $\sig$, $\dom$, $\gam$.
\end{thm}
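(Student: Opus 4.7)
The plan is to reduce Theorem \ref{T6} to the interior propagation of smallness of Theorem \ref{main-thm-1} by constructing an $H^1$ extension of $u$ across $\gam$, following the strategy of \cite[Theorem 1.7]{ARRV}. By shrinking $\gam$ slightly we may assume $\overline{\gam}\cap\overline{\sig}=\emptyset$, so that a one-sided tubular neighborhood $V\subset\R^n\setminus\overline{\dom}$ of $\gam$ is at positive distance from $\sig$. Locally straightening $\pr\dom$ via its Lipschitz regularity and using standard trace lifting together with partitions of unity, I would produce an extension $v\in H^1(V)$ whose trace on $\gam$ equals $u|_{\pr\dom}$ there and whose conormal derivative (in the extended metric) matches $\sum_{jk}a_{jk}n_j\pr_k u|_{\pr\dom}$, and which satisfies
\begin{equation*}
\|v\|_{H^1(V)}\leq C\eta.
\end{equation*}
The coefficients $(a_{jk},q)$ are then extended across $\gam$ into $V$ so that, setting $\tilde\dom=\dom\cup V$, the extended tuple $\tilde\gamma$ lies in $\mathscr{V}_0(\tilde\dom_\pm,\lambda',M',K_1')$.

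Set $\tilde u=u$ in $\dom$ and $\tilde u=v$ in $V$. The matching of trace and conormal derivative forces the corresponding jumps of $\tilde u$ across $\gam$ to be small, so testing against $\varphi\in C^\infty_c(\tilde\dom)$ and integrating by parts on each side of $\gam$ yields
\begin{equation*}
\el_{\tilde\gamma}\tilde u=\tilde f+\nabla\cdot \tilde F\quad\text{in }\tilde\dom,
\end{equation*}
where $\tilde f, \tilde F$ combine $f, F$, contributions from $v$ on $V$, and distributional source terms supported on $\gam$ coming from the small mismatches; by duality these satisfy $\|\tilde f\|_{L^2(\tilde\dom)}+\|\tilde F\|_{L^2(\tilde\dom)}\leq C(\epsilon+\eta)$, while $\|\tilde u\|_{L^2(\tilde\dom)}\leq C(E_0+\eta)$. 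Picking a ball $B_r(x_0)\subset V$ gives $\|\tilde u\|_{L^2(B_r(x_0))}\leq C\eta$. Given any open $D\subset\dom$ with $\mbox{dist}(D,\pr\dom)\geq h$, the set $\tilde D=D\cup V'$, with $V'\subset V$ a slight shrinking, is open, connected, contains $B_r(x_0)$, and satisfies $\mbox{dist}(\tilde D,\pr\tilde\dom)\geq c h$ for a geometric constant $c$. Applying Theorem \ref{main-thm-1} to $\tilde u$ on $\tilde\dom$ then yields
\begin{equation*}
\|u\|_{L^2(D)}\leq \|\tilde u\|_{L^2(\tilde D)}\leq C (\eta+\epsilon)^\delta (E_0+\eta+\epsilon)^{1-\delta},
\end{equation*}
with $C$ and $\delta$ of the claimed form; $\bar h$ is chosen so that $V$ stays clear of $\sig$ and so that the hypothesis $h<h_0$ of Theorem \ref{main-thm-1} is met.

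The main obstacle is the careful construction of $v$ and the bookkeeping of the source terms $\tilde f, \tilde F$. Under only Lipschitz regularity of $\pr\dom$, the conormal derivative of $u$ on $\gam$ lives in $\hbi(\gam)$, so the matching of $v$'s conormal derivative must be interpreted in the duality sense, and the resulting distributional contribution on $\gam$ has to be absorbed into $\tilde F$ (as a divergence of an $L^2$ field) rather than into $\tilde f$. This is a standard but technical piece of analysis, carried out in detail in \cite{ARRV}, and the substitution of Theorem \ref{main-thm-1} for \cite[Theorem 5.1]{ARRV} is the only modification required to accommodate the piecewise Lipschitz setting, since the two estimates have exactly the same dependence on the geometric parameters.
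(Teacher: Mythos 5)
Your overall strategy — extend $u$ into a one-sided tubular neighborhood $V$ of $\gam$ using the Cauchy data, control the extension in $H^1$ by $\eta$, and then invoke the interior propagation of smallness Theorem~\ref{main-thm-1} on the enlarged domain with a ball placed in $V$ — is exactly the route the paper has in mind: the paper explicitly defers to the proof of \cite[Theorem 1.7]{ARRV} with Theorem~\ref{main-thm-1} substituted for \cite[Theorem 5.1]{ARRV}, and that proof proceeds precisely by such an extension argument.

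There is, however, a concrete error in the last step. You set $\tilde D=D\cup V'$ and claim this is connected. It is not: by hypothesis $D\subset\dom$ with $\mbox{\rm dist}(D,\pr\dom)\geq h$, while $V'\subset V\subset\R^n\setminus\overline{\dom}$, so $D$ and $V'$ lie on opposite sides of $\pr\dom$ and are separated by a gap of width at least $h$. Since Theorem~\ref{main-thm-1} requires the target set to be connected, the inequality cannot be applied to $D\cup V'$. The fix, consistent with the geometry-invariant form of the constants, is to apply Theorem~\ref{main-thm-1} with the target
\begin{equation*}
\tilde D = \left\{x\in\tilde\dom:\mbox{\rm dist}(x,\pr\tilde\dom)>ch\right\}
\end{equation*}
for a suitable small constant $c>0$: for $h<\bar h$ this set is open, connected, contains $D$ (since $\dom\subset\tilde\dom$ implies $\mbox{\rm dist}(D,\pr\tilde\dom)\geq h$) as well as a fixed ball $B_r(x_0)\subset V$, and $|\tilde\dom|/(ch)^n\lesssim|\dom|/h^n$ so the constants $C$ and $\delta$ retain the claimed form with $C_1$, $C_2$, $\tau$ now also depending on $\dom$, $\gam$ through $\tilde\dom$ and $V$. (Note also that $D$ in the statement of the theorem need not itself be connected, which is a further reason the argument must go through such an enlargement rather than through $D$ directly.) With this correction the argument is sound; the remaining technical points you flag (interpreting the conormal matching in the $\hbi$ duality pairing and absorbing the resulting singular source on $\gam$ into $\nabla\cdot\tilde F$) are indeed handled in \cite{ARRV} and carry over verbatim.
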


Finally, we state a global version of the preceding theorem.

\begin{thm}[see {\cite[Theorem 1.9]{ARRV}}]\label{T7}
Let $u\in H^1(\dom)$ be a solution of 
\begin{equation*}
\el_\gamma u=f+\nabla \cdot F,\quad \|f\|_{L^2(\dom)}+\|F\|_{L^(\dom)}\leq\epsilon,
\end{equation*}
with $u|_{\pr\dom}\in H^{1/2}(\Gamma)$, $\sum_{jk}a_{jk}n_j\pr_k u|_{\pr\dom}\in \hbi(\Gamma)$,
\begin{equation*}\textstyle
\|u|_{\pr\dom}\|_{H^{1/2}(\Gamma)}+\|\sum_{jk}a_{jk}n_j\pr_k u|_{\pr\dom}\|_{\hbi(\Gamma)}    \leq\eta,
\end{equation*}
\begin{equation*}
\|u\|_{L^2(\dom)}\leq E_0,
\end{equation*}
for some  $\eta,\epsilon, E_0 >0$.
Then
\begin{equation*}
\|u\|_{L^2(\dom)}\leq (E+\epsilon+\eta)\omega\left(\frac{\epsilon+\eta}{E+\epsilon+\eta}\right),
\end{equation*}
where
\begin{equation*}
\omega(t)\leq\frac{C}{\left|\log t\right|^\mu},\quad t<1,
\end{equation*}
and $C>0$, $0<\mu<1$ depend on $\lambda$, $M$, $K_1$,  $r_0$, $K_0$, $\sig$, $\dom$, $\gam$.
\end{thm}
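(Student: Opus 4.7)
The plan is to derive Theorem~\ref{T7} by combining the interior Cauchy stability of Theorem~\ref{T6} with the global propagation of smallness in Theorem~\ref{main-thm-2}, following the template of \cite[Theorem 1.9]{ARRV}. First, I would fix once and for all a ball $B_\rho(x_0)\subset\subset\dom$ with $\mbox{dist}(B_\rho(x_0),\pr\dom)\geq h$ for some $\rho>0$ and $h\in(0,\bar h)$ depending only on $\dom,\gam,\sig$. Applying Theorem~\ref{T6} with $D=B_\rho(x_0)$ produces the power-type estimate
\begin{equation*}
\|u\|_{L^2(B_\rho(x_0))}\leq \tilde C(\ep+\eta)^{\tilde\delta}(E_0+\ep+\eta)^{1-\tilde\delta},
\end{equation*}
with fixed $\tilde C>0$ and $\tilde\delta\in(0,1)$ depending only on the specified parameters.

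Next, I would establish a global bound $\|u\|_{H^1(\dom)}\leq \bar C(E_0+\ep+\eta)$. Near the accessible boundary $\gam$ this follows from a standard Cauchy-data energy identity, using ellipticity of $(a^\pm_{jk})$ together with the prescribed $H^{1/2}(\gam)$ and $\hbi(\gam)$ bounds on the trace and conormal derivative. Away from $\gam$ and across $\sig$, one uses a piecewise Caccioppoli inequality (exploiting the transmission conditions implicit in the divergence-form operator $\el_\gamma$) to control $\|\nabla u\|_{L^2}$ by $\|u\|_{L^2(\dom)}$ and the source norm $\ep$, with the resulting constants absorbed into those depending on $\dom,\sig,\gam$. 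Then, applying Theorem~\ref{main-thm-2} with this $H^1$ bound and with the interior smallness from the previous step yields
\begin{equation*}
\|u\|_{L^2(\dom)}\leq(E_0+\ep+\eta)\,\omega\!\left(\frac{\tilde C(\ep+\eta)^{\tilde\delta}(E_0+\ep+\eta)^{1-\tilde\delta}+\ep}{\bar C(E_0+\ep+\eta)+\ep}\right),
\end{equation*}
where $\omega(t)\leq C/|\log t|^\mu$. Since $|\log s^{\tilde\delta}|^{-\mu}=\tilde\delta^{-\mu}|\log s|^{-\mu}$, the power-type interior smallness gets absorbed into a logarithmic modulus in $(\ep+\eta)/(E_0+\ep+\eta)$, with a possibly reduced exponent $\mu'\in(0,1)$, producing the claimed estimate.

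The main obstacle is the global $H^1$ bound of the second step, because the Cauchy data is prescribed only on $\gam\subsetneq\pr\dom$ and no information is given on $\pr\dom\setminus\gam$. One must carefully combine the near-$\gam$ energy estimate with an interior/near-$(\pr\dom\setminus\gam)$ Caccioppoli argument (using a cutoff that vanishes only on $\pr\dom\setminus\gam$) while respecting the transmission across $\sig$. An alternative route that entirely bypasses a global $H^1$ bound is to imitate the $h$-optimization in the proof of Theorem~\ref{main-thm-1}: apply Theorem~\ref{T6} with $D=D_h:=\{x\in\dom:\mbox{dist}(x,\pr\dom)>h\}$, control the boundary strip $\dom\setminus D_h$ by a Lipschitz-domain trace-type inequality of the form $Ch^{1/2}\|u\|_{H^1(\dom)}$, and balance both contributions by varying $h$ as a function of $(\ep+\eta)/(E_0+\ep+\eta)$ to extract the logarithmic modulus.
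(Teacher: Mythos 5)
The paper itself gives no proof of Theorem~\ref{T7}; it only remarks (at the start of Section~\ref{CONSEQUENCES}) that the argument is ``identical'' to that of \cite[Theorem~1.9]{ARRV}, with Theorem~\ref{main-thm-1} used in place of \cite[Theorem~5.1]{ARRV}. Your overall template --- interior Cauchy stability on $D_h$ from Theorem~\ref{T6}, control of the remaining boundary strip, and an $h$-optimization to convert the power-type interior gain into a logarithmic modulus (or, equivalently, feeding the interior smallness into Theorem~\ref{main-thm-2}) --- is indeed the ARRV template, and the bookkeeping in your step~3, using $|\log s^{\tilde\delta}|^{-\mu}=\tilde\delta^{-\mu}|\log s|^{-\mu}$, is correct.

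The gap is in step~2, and you have correctly located it but not closed it. The claimed global bound $\|u\|_{H^1(\dom)}\le\bar C(E_0+\ep+\eta)$ does \emph{not} follow from the stated hypotheses. A Caccioppoli/energy identity tested against $u\phi^2$, where $\phi$ vanishes on $\pr\dom\setminus\gam$, yields
\begin{equation*}
\int_\dom \phi^2|\nabla u|^2\;\lesssim\;\int_\dom|\nabla\phi|^2|u|^2+\eta^2+\ep^2,
\end{equation*}
i.e.\ control of $\|\phi\nabla u\|_{L^2}$, not of $\|\nabla u\|_{L^2(\dom)}$. Precisely where $\phi$ is forced to vanish --- the strip near $\pr\dom\setminus\gam$, where no Cauchy data is prescribed --- the gradient is left uncontrolled, and for a general $H^1$ solution the ratio $\|\nabla u\|_{L^2}/\|u\|_{L^2}$ there can be arbitrarily large. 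Your alternative $h$-optimization route invokes $\|u\|_{L^2(\dom\setminus D_h)}\lesssim h^{1/2}\|u\|_{H^1(\dom)}$ and so relies on the very same unavailable $H^1$ bound. Thus both of your routes require more than the theorem actually assumes. Note also that the theorem statement already flags an inconsistency --- the hypothesis is written with $E_0$ as an $L^2$ bound, but the conclusion features $E$, which elsewhere (Theorem~\ref{main-thm-2}) denotes the $H^1$ bound. If the intended hypothesis is $\|u\|_{H^1(\dom)}\le E$, your first route goes through directly (steps~1 and~3, skipping step~2). If the $L^2$ hypothesis is intentional, then a genuinely different device for controlling the boundary strip near $\pr\dom\setminus\gam$ is needed (e.g.\ a weighted energy estimate with weight vanishing on $\pr\dom\setminus\gam$, as in the actual ARRV argument, in which case the conclusion is naturally stated in a weighted norm), and your write-up does not supply it.
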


\subsection{Propagation of smallness from a set of positive measure}

The next result follows easily from Theorems 1.1  of \cite{MV} and our main result. 

\begin{thm}[see {\cite[Theorem 1.1]{MV}}]
Let $u\in H^1(\dom)$ be a solution of 
$
\el_\gamma u=0.
$
Suppose $h>0$ is such that $(\dom_+)_h$ is connected, and that $E\subset(\dom_+)_h$ is a measurable set of positive measure. If
$
||u||_{L^2(E)}\leq \eta,\quad ||u||_{L^2(\dom)}\leq 1,
$
then
\begin{equation*}
||u||_{L^2(\dom_h)}\leq C |\log \eta|^{-\mu} ,
\end{equation*}
where the constants $C,\mu>0$ depend on $\lambda$, $M$, $K_1$,  $r_0$, $K_0$, $\sig$, $\dom$, $|E|$, and $h$.
\end{thm}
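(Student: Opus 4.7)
The plan is to reduce the statement to two ingredients already available: \cite[Theorem~1.1]{MV} applied to $u$ restricted to the Lipschitz-coefficient side $\dom_+$, and our Theorem~\ref{main-thm-1} used to carry the resulting smallness across $\sig$.

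First, observe that $u|_{\dom_+}$ solves the homogeneous divergence-form equation
\begin{equation*}
\sum_{jk}\pr_j(a^+_{jk}\pr_k u)+qu=0\text{ in }\dom_+,
\end{equation*}
whose leading coefficients are genuinely Lipschitz. Since by hypothesis $(\dom_+)_h$ is connected and $E\subset(\dom_+)_h$ has positive measure sitting at a positive distance from $\pr\dom_+$, we may invoke \cite[Theorem~1.1]{MV} on the Lipschitz subdomain $\dom_+$. I would fix, once and for all, a ball $B_\rho(x_1)$ with $B_{2\rho}(x_1)\subset(\dom_+)_h$, where $\rho$ and $x_1$ are chosen as functions of $h$, $|E|$, and the geometry of $\dom_+$ only. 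The normalization $\|u\|_{L^2(\dom)}\le 1$ supplies the large-scale bound required in the Malinnikova--Vessella logarithmic convexity estimate, and the output is
\begin{equation*}
\|u\|_{L^2(B_\rho(x_1))}\leq C_1|\log\eta|^{-\mu_1},
\end{equation*}
for constants $C_1>0$, $\mu_1\in(0,1)$ depending on the claimed parameters.

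Next, since $B_\rho(x_1)\subset(\dom_+)_h\subset\dom_h$, I would apply Theorem~\ref{main-thm-1} with $D=\dom_h$, seed ball $B_\rho(x_1)$, and $\epsilon=0$ (shrinking $h$ by a fixed factor if needed to enforce the compatibility condition $\rho/2>h$; the resulting constants depend only on the permitted quantities). This yields a H\"older-type bound which, combined with the previous step and $\|u\|_{L^2(\dom)}\le 1$, gives
\begin{equation*}
\|u\|_{L^2(\dom_h)}\leq C_2\,\|u\|_{L^2(B_\rho(x_1))}^{\delta}\leq C_2 C_1^{\delta}|\log\eta|^{-\mu_1\delta}=C|\log\eta|^{-\mu},
\end{equation*}
with $\mu=\mu_1\delta>0$ and $C$ depending on the allowed parameters.

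The main obstacle, such as it is, is purely geometric bookkeeping: the intermediate ball $B_\rho(x_1)$ must simultaneously satisfy the hypotheses of both tools --- it has to be large enough and sufficiently far from $\pr\dom$ to serve as a seed for Theorem~\ref{main-thm-1}, while remaining well inside $(\dom_+)_h$ so that the MV argument on $\dom_+$ produces the logarithmic bound. Since MV's constants already depend on $|E|$ and on the geometry of $(\dom_+)_h$, no new difficulty arises; all resulting dependencies are absorbed into the final $C$ and $\mu$, matching the conclusion of the theorem.
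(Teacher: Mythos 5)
Your approach matches the paper's almost verbatim: apply the Malinnikova--Vessella result on the Lipschitz-coefficient side $\dom_+$ to move smallness from $E$ to a fixed interior ball, then invoke Theorem~\ref{main-thm-1} to propagate that estimate to $\dom_h$.

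One technical point you gloss over: you refer to $\dom_+$ as ``the Lipschitz subdomain,'' but $\dom_+$ need not have Lipschitz boundary. Its boundary consists of a portion of the Lipschitz $\pr\dom$ together with a portion of the $C^2$ surface $\sig$, and where these two pieces meet the resulting domain can fail to be Lipschitz, whereas \cite[Theorem~1.1]{MV} requires a Lipschitz domain. The paper handles this by replacing $\dom_+$ with a slightly smaller Lipschitz domain $\tilde\dom_+\subset\dom_+$ chosen so that $E\subset(\tilde\dom_+)_h$, and applying the MV estimate on $\tilde\dom_+$ instead; everything else in your argument then goes through unchanged.
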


\begin{proof}
Note that even if $\dom_+$ is does not have Lipschitz boundary as required by \cite[Theorem 1.1]{MV}, we can still choose a slightly smaller  Lipschitz domain $\tilde\dom_+\subset \dom_+$ such that $E\subset (\tilde \dom_+)_h$. By applying \cite[Theorem 1.1]{MV}, we get that
\begin{equation*}
||u||_{L^2(B_r(x_0))}\leq C' |\log\eta|^{-\mu'},
\end{equation*}
where we have picked a ball $B_r(x_0)\subset (\tilde \dom_+)_h$. Applying our Theorem \ref{main-thm-1} with $D=\dom_h$, the result follows.
\end{proof}

\subsection{Quantitative Runge property}

The final results we would like to include are two consequences of Theorems \ref{T6} and \ref{T7}. These are  a quantitative versions of the  Runge approximation property and result that come from the work \cite{RS}. 

Let $D$, $\tilde D$   be open subsets with Lipschitz boundaries such that $D\subset\subset\tilde D\subset\subset \dom$ and define
\begin{equation*}
\s_1=\left\{ u\in H^1(D):Lu=0\text{ in }D\right\},
\end{equation*}
\begin{equation*}
\tilde\s_1=\left\{ u\in H^1(\tilde D):Lu=0\text{ in }\tilde D\right\},
\end{equation*}
\begin{equation*}
\s_2=\left\{u\in H^1(\dom):Lu=0\text{ in }\dom,u|_{\gam}\in \hb(\gam)\right\}.
\end{equation*}

The following two theorems can be proven by an argument identical to that in \cite{RS}.

\begin{thm}[see {\cite[Theorem 2]{RS}}]
There exist $\mu>0$ and $C>1$, which depend on $n$,  $\lambda$, $M$, $K_1$, $r_0$, $K_0$, $\sig$, $\dom$, $\gam$, such that for any $v\in\s_1$ and any $0<\epsilon<1$, there exists a $u\in\s_2$ such that
\begin{equation*}
\|v-u\|_{L^2(D)}\leq \epsilon\|v\|_{H^1(D)},\quad \|u|_{\pr\dom}\|_{\hb(\gam)}\leq C\exp(C\epsilon^{-\mu})\|v\|_{L^2(D)}.
\end{equation*}
\end{thm}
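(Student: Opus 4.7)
The strategy is the duality/regularization argument of [RS]. For $g \in \hb(\gam)$, let $Ag := u|_D$ where $u \in H^1(\dom)$ is the suitably extended solution of $\el_\gamma u = 0$ in $\dom$ with Dirichlet trace $g$ on $\gam$. This defines a bounded linear operator $A: \hb(\gam) \to L^2(D)$ whose range is dense in $\s_1$ in the $L^2(D)$ topology by the qualitative Runge property, a standard consequence of unique continuation for $\el_\gamma$ (which in our piecewise setting is already a by-product of the three-region inequality from Theorem \ref{thm-FLVW}).

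The quantitative content is extracted from a conditional stability estimate for $(A^*)^{-1}$. A Green's identity computation identifies $A^* f$, for $f \in L^2(D)$, with the conormal trace on $\gam$ of the function $w \in H^1_0(\dom)$ solving $\el_\gamma^* w = f \chi_D$ in $\dom$. Thus $w$ carries trivial Dirichlet trace on $\gam$ together with Neumann trace $A^* f$; applying our global Cauchy stability (Theorem \ref{T7}) to $\el_\gamma^*$, whose coefficients satisfy the same structural hypotheses as $\el_\gamma$, yields a logarithmic modulus of continuity for $(A^*)^{-1}$ of the form
\begin{equation*}
\|f\|_{L^2(D)} \leq C\,|\log\|A^* f\|_{\hbi(\gam)}|^{-\mu}\,\|f\|_{H^1(D)},
\end{equation*}
conditionally on an a priori $H^1$-bound on $f$ (which ultimately comes from interior elliptic regularity applied to $w$).

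With this conditional stability in hand, the desired approximant is obtained by Tikhonov regularization. For $\alpha > 0$ let $g_\alpha \in \hb(\gam)$ be the unique minimizer of
\begin{equation*}
J_\alpha(g) = \|Ag - v\|^2_{L^2(D)} + \alpha \|g\|^2_{\hb(\gam)},
\end{equation*}
which exists by Hilbert-space convexity and coercivity. A spectral/SVD analysis of the compact operator $A$, combined with the logarithmic singular-value decay implied by the conditional stability of $(A^*)^{-1}$, shows that choosing $\alpha$ of the form $\exp(-C\epsilon^{-\mu'})$ produces $\|Ag_\alpha - v\|_{L^2(D)} \leq \epsilon\|v\|_{H^1(D)}$ together with $\|g_\alpha\|_{\hb(\gam)} \leq C\exp(C\epsilon^{-\mu'})\|v\|_{L^2(D)}$, which is the required conclusion upon taking $u$ to be the solution generated by $g_\alpha$. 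The main obstacle is essentially bookkeeping: setting up the boundary function spaces so that $A$ and $A^*$ are genuine Hilbert-space adjoints, verifying that the problem satisfied by $w$ fits the hypotheses of Theorem \ref{T7}, and converting the logarithmic modulus into the explicit exponential form via the SVD. Each of these steps is carried out in detail in [RS], and the argument transfers verbatim to our piecewise-Lipschitz setting with Theorem \ref{T7} replacing the classical Cauchy stability.
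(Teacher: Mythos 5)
Your proposal matches the paper's own treatment: the paper gives no independent proof of this theorem, stating only that it ``can be proven by an argument identical to that in [RS]'' as a consequence of Theorems \ref{T6} and \ref{T7}, and your sketch correctly reproduces the [RS] strategy (Poisson-operator duality, identification of $A^*$ with the conormal trace of the adjoint problem, conditional Cauchy stability from Theorem \ref{T7}, and Tikhonov/SVD regularization to convert the logarithmic modulus into the exponential bound). One small caution: the conditional stability estimate you display, bounding $\|f\|_{L^2(D)}$ by a logarithmic factor times $\|f\|_{H^1(D)}$, is not quite the form used in [RS] --- there the modulus of continuity controls $w$ (or a negative-norm of $f$) in terms of the Cauchy data of $w$ and an a priori $H^1(\Omega)$ bound on $w$, not an $H^1$ norm of the source $f$ --- but since you explicitly defer these bookkeeping details to [RS], as the paper itself does, this does not affect the correctness of your overall outline.
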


\begin{thm}[see {\cite[Theorem 3]{RS}}]
There exist $\mu>1$, $C>1$,  which depend on $n$, $\lambda$, $M$, $K_1$, $r_0$, $K_0$, $\sig$, $\dom$, $\gam$, such that for any $\tilde v\in\tilde\s_1$ and any $0<\epsilon<1$, there exists a $u\in\s_2$ such that
\begin{equation*}
\|\tilde v-u\|_{L^2(D)}\leq \epsilon\|\tilde v\|_{H^1(\tilde D)},\quad \|u|_{\pr\dom}\|_{\hb(\gam)}\leq C\epsilon^{-\mu}\|\tilde v\|_{L^2(D)}.
\end{equation*}
\end{thm}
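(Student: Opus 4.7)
The plan is to follow the Hahn--Banach / duality strategy of \cite{RS}: the quantitative Runge statement reduces to a quantitative uniqueness estimate for a dual Cauchy problem, which is in turn supplied by Theorem \ref{T7}. Concretely, I would introduce the control-to-solution map
\begin{equation*}
A\colon \hb(\gam)\to L^2(D), \qquad Ag = u_g|_D,
\end{equation*}
where $u_g\in H^1(\dom)$ solves $\el_\gamma u_g=0$ in $\dom$ with $u_g|_{\gam}=g$ and $u_g|_{\pr\dom\setminus\gam}=0$. The qualitative Runge property is precisely the density of the range of $A$ in $\s_1$. A routine integration by parts identifies $A^*\colon L^2(D)\to\hbi(\gam)$, up to sign, with the map sending $w$ to the co-normal trace on $\gam$ of the solution $p\in H^1_0(\dom)$ of $\el_\gamma^* p=w\chi_D$ with $p|_{\pr\dom}=0$.

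The crucial input is then a quantitative injectivity bound for $A^*$: if $\|A^*w\|_{\hbi(\gam)}\leq\eta$ and $\|w\|_{L^2(D)}\leq 1$, then $p$ solves an adjoint Cauchy problem on $\dom\setminus\bar D$ with zero Dirichlet trace on $\pr\dom$ and co-normal trace of size $\eta$ on $\gam$. Applying Theorem \ref{T7} to $p$ (its hypotheses for $\el_\gamma^*$ are inherited from those for $\el_\gamma$) gives $\|p\|_{L^2(\dom)}\leq C|\log\eta|^{-\mu}$, which combined with $\el_\gamma^* p=w$ in $D$ and a standard Caccioppoli estimate yields $\|w\|_{L^2(D)}\leq C|\log\eta|^{-\mu}$.

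The approximation statement then follows by Tikhonov regularization. For $\alpha>0$ set $g_\alpha=(A^*A+\alpha)^{-1}A^*\tilde v$ and apply the spectral calculus for $AA^*$ to write
\begin{equation*}
\|Ag_\alpha-\tilde v\|^2_{L^2(D)} = \int \frac{\alpha^2}{(\sigma^2+\alpha)^2}\dd\mu_{\tilde v}(\sigma), \qquad \|g_\alpha\|^2_{\hb}=\int \frac{\sigma^2}{(\sigma^2+\alpha)^2}\dd\mu_{\tilde v}(\sigma),
\end{equation*}
with $\mu_{\tilde v}$ the spectral measure of $\tilde v$ under $AA^*$. The injectivity bound above controls the singular-value counting function of $A$, and a suitably calibrated choice $\alpha=\alpha(\epsilon)$ delivers both the approximation error $\|Ag_\alpha-\tilde v\|_{L^2(D)}\leq\epsilon\|\tilde v\|_{H^1(\tilde D)}$ and the polynomial control $\|g_\alpha\|_{\hb}\leq C\epsilon^{-\mu}\|\tilde v\|_{L^2(D)}$.

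The enlargement from $D$ to $\tilde D$ enters only in upgrading the exponential rate of the companion theorem (Theorem 2 of \cite{RS}) to the polynomial rate stated here. Since $\tilde v$ extends as a solution on the strictly larger set $\tilde D$, interior Caccioppoli gives $\|\tilde v\|_{H^1(D')}\lesssim \|\tilde v\|_{L^2(\tilde D)}$ for any intermediate $D\subset\subset D'\subset\subset\tilde D$, and this converts the logarithmic decay of the spectral coefficients $\la\tilde v,\phi_j\ra_{L^2(D)}$ into polynomial decay. The main obstacle in our setting will be ensuring that such Caccioppoli estimates are uniform in spite of the possible jumps of the coefficients across $\sig$; this will be handled by first cutting off in a thin neighborhood of $\sig$, applying the interior estimates on $\dom_\pm$ separately, and finally crossing the interface by means of Theorem \ref{main-thm-1}.
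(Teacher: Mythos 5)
Your sketch follows the duality plus Tikhonov-regularization plus Cauchy-stability blueprint of \cite{RS}, which is indeed the route the paper invokes (the paper gives no independent proof, citing the argument as identical to that in \cite{RS}). However, the middle paragraph contains two genuine missteps. First, Theorem \ref{T7} cannot be applied to $p$ on all of $\dom$: there $p$ solves $\el^*_\gamma p = w\chi_D$ with a source of order one (not of order $\eta$), so the hypothesis on the right-hand side in Theorem \ref{T7} is badly violated and the resulting estimate is vacuous. The Cauchy stability must instead be applied on $\dom\setminus \overline D$ (or an intermediate annular region), where $p$ solves the homogeneous adjoint equation with small co-normal data on $\gam$ and zero Dirichlet data on $\pr\dom$. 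Second, the claim that a Caccioppoli estimate then yields $\|w\|_{L^2(D)}\leq C|\log\eta|^{-\mu}$ is in the wrong direction: Caccioppoli controls $\|\nabla p\|$ by $\|p\|$, not $\|\el^*_\gamma p\|$ by $\|p\|$. What one actually controls in the \cite{RS} scheme is not $\|w\|_{L^2(D)}$ directly but rather the pairing $\la\tilde v,w_j\ra_{L^2(D)}$ with the singular vectors $w_j$: by Green's identity this pairing is shifted to boundary terms on $\pr D'$ for an intermediate $D\subset\subset D'\subset\subset\tilde D$, where the adjoint solution $\bar w_j$ and $\nabla \bar w_j$ are small (Cauchy stability in the annulus) and $\tilde v$, $\nabla\tilde v$ are bounded by $\|\tilde v\|_{L^2(\tilde D)}$ (interior estimates). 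This is the mechanism that converts the logarithmic Cauchy modulus into decay of the spectral coefficients of $\tilde v$, and the extension of $\tilde v$ to the strictly larger $\tilde D$ is what upgrades that decay to polynomial rate. Finally, the concern at the end about Caccioppoli across $\sig$ is misplaced: the Caccioppoli inequality for $H^1$ weak solutions of divergence-form equations only requires $L^\infty$ coefficients with uniform ellipticity, so no interface-crossing argument is needed for that step.
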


\bibliography{propagation}
\bibliographystyle{plain}
\end{document}